\documentclass[leqno,12pt]{amsart} 
\setlength{\textheight}{23cm}
\setlength{\textwidth}{16cm}
\setlength{\oddsidemargin}{0cm}
\setlength{\evensidemargin}{0cm}
\setlength{\topmargin}{0cm}
\usepackage{amssymb}
\usepackage{tikz,tikz-cd}
\usetikzlibrary{matrix,arrows.meta}
%
%
%
\theoremstyle{plain} 
\newtheorem{theorem}{\indent\sc Theorem}[section]
\newtheorem{lemma}[theorem]{\indent\sc Lemma}

\newtheorem{proposition}[theorem]{\indent\sc Proposition}

\theoremstyle{definition} 

\newtheorem{remark}[theorem]{\indent\sc Remark}
\newtheorem{example}[theorem]{\indent\sc Example}

%

%


\usepackage[active]{srcltx}
\usepackage{pdfsync}
\usepackage{tabularx} 
\usepackage{arydshln}

\newcommand{\CC}{\mathbb{C}}

\newcommand{\ZZ}{\mathbb{Z}}
\newcommand{\HH}{\mathbb{H}}
\newcommand{\OO}{\mathbb{O}}
\newcommand{\RR}{\mathbb{R}}

\newcommand{\SO}{\mathrm{SO}}
\newcommand{\SU}{\mathrm{SU}}
\newcommand{\U}{\mathrm{U}}
\newcommand{\jj}{\mathbf{j}}
\newcommand{\ii}{\mathbf{i}}
\newcommand{\kk}{\mathbf{k}}
\renewcommand{\ll}{\mathbf{l}}
\renewcommand{\hom}{\mathrm{Hom}}

\def\dim{\mathop{\hbox{\rm dim}}}

\newcommand{\der}{\mathfrak{der}}

\newcommand{\id}{\mathrm{id}}
\newcommand{\mm}{\mathfrak{m}}
\newcommand{\hh}{\mathfrak{h}}

\def\span#1{\langle #1\rangle}
 
\title[  Homogeneous spaces of $G_2$]{  Reductive homogeneous spaces \\of the compact Lie group $G_2$}

\author[C.~Draper, F.J.~Palomo]{ 
Cristina Draper${}^*$ and Francisco J. Palomo{${}^\star$}  
\bigskip \\ 
}  

\subjclass[2010]{Primary  
53C30.  
Secondary 
17A75,  
17B25,  
53C35,   
53C28,  
53C38,  
53D12. 
}
\keywords{Octonions,  exceptional algebra $\mathfrak{g}_2$, exceptional group $G_2$, homogeneous manifold.
}
\thanks{${}^*$ Supported by    Junta de Andaluc\'{\i}a  through projects  FQM-336, UMA18-FEDERJA-119, and PAIDI project P20\_01391, and  by the Spanish Ministerio de Ciencia e Innovaci\'on   through projects  PID2019-104236GB-I00/AEI/10.13039/501100011033    and PID2020-118452GB-I00, all of them with FEDER funds.}
\thanks{${}^\star$    Partially
supported by Spanish MICINN project PID2020-118452GB-I00 and Andalusian and ERDF projects FQM-494 and  P20$_{-}$01391.}

\address{%
Departamento de \'Algebra, Geometr\'\i a y Topolog\'\i a,\endgraf
Universidad de M\'{a}laga, 
 29071 M\'{a}laga,  
Spain
}
\email{cdf@uma.es}
\address{%
Departamento de Matem\'atica Aplicada,\endgraf
Universidad de M\'{a}laga, 
 29071 M\'{a}laga,  
Spain
}
\email{fpalomo@uma.es}
\begin{document}


\maketitle  

\vspace{-45pt}\begin{center}\scriptsize{\textrm{Dedicated to Alberto Elduque on his 60th birthday}}\end{center}

\begin{abstract}
The first author defended her doctoral thesis \lq\lq Espacios homog\'eneos reductivos y \'algebras no asociativas\rq\rq\  in 2001, supervised by P. Benito and A. Elduque. This thesis contained the classification of the Lie-Yamaguti algebras with standard enveloping algebra $\mathfrak{g}_2$ over fields of characteristic zero, which in particular gives the classification of the homogeneous reductive spaces of the compact Lie group $G_2$. In this work we   revisit this classification from a more geometrical approach. We provide too geometric models of the corresponding homogeneous spaces and   make explicit some relations among them. 
 \end{abstract}

\section{Introduction }

This paper extends the talk \emph{In the footsteps of Alberto Elduque} given by the first author in the  conference \lq\lq Non-associative algebra and related topics\rq\rq\, (Coimbra, July 2022), dedicated to honor Alberto Elduque on the occasion of his 60th birthday. We thought that seeing his student following his steps and developing his ideas along the years would be a nice way of thanking him for so many years of friendship, for sharing ideas and adventures. And, of course, for sharing our love of exceptional Lie algebras and exceptional objects.   \smallskip

The geometries associated to exceptional Lie groups often 
present interesting properties which permit both to understand these exceptional algebraic objects and to shed some light on the geometric features of homogeneous manifolds for such groups.  The smallest   exceptional Lie groups have real dimension $14$, those of type $\textrm{G}_2$, namely, the automorphism groups of the octonion division algebra $\OO$ and of   the split   division algebra $\OO_s$, and the double covering of the last one.
We will  focus on the connected and simply-connected compact Lie group $G_2=\mathrm{Aut}(\OO)$.

For the first author, the Lie group $G_2$  has been a traveling companion throughout the years. The first of these encounters occurred   in her doctoral dissertation  \cite{tesis}, where Lie-Yamaguti algebras with standard enveloping algebra of type $\textrm{G}_2$ over fields of characteristic zero were classified. 
Lie-Yamaguti algebras are binary-ternary algebras defined to codify reductive homogeneous spaces to an algebraic structure, similarly to the way Lie triple systems are defined to codify symmetric spaces.
In particular, the above mentioned classification  essentially gives the list of the homogeneous reductive spaces of the   Lie groups of type $\textrm{G}_2$. 
Motivated by these facts and by the very interesting geometrical structures associated with $G_2$, 
our main purpose will be to give geometric descriptions of the homogeneous reductive spaces of the compact Lie group $G_2$.
When we began to look for descriptions of these homogeneous manifolds, we found that some of them were well-known. However, not only these results were scattered in the literature, but also we found insufficient information flow between differential geometers and researchers in Algebra. For instance, until now the cites to \cite{LYg2} were from researchers working in topics related to nonassociative   structures. Also the first reference we have found about the quotient $G_2/\SO(3)$ is \cite{Enoyoshi}, only two years ago. We have tried to make accessible the results of  \cite{LYg2}, while giving a unified perspective. For instance, the homogeneous spaces $G_2/Sp(1)^+$ and $G_2/Sp(1)^-$ are   considered in   the book   \emph{Sasakian Geometry} \cite[Example~13.6.8]{libroBG}, which offers a very nice  panoramic of the diagram submersions, but we add not only a self-contained description, but also the relations with the remaining $G_2$-homogeneous spaces.

The exceptional Lie group $G_2$ occurs in different situations and in various guises in Differential Geometry (see \cite{AgricolaG2} for an historical perspective). With no claim to be exhaustive, we would like to recall several such contexts. 
$G_2$ may be the holonomy group of certain non locally symmetric $7$-dimensional Riemannian manifolds,  according to the Berger list of irreducible  holonomies (see for instance \cite[Chap. 10]{Besse}). Note also that $G_{2}$ is a subgroup of $\SO(7)$ and then, one can also consider $G_2$-structures on $7$-dimensional Riemannian manifolds. Essentially,  a $G_2$-structure  consists on a $3$-form
which permits to construct a Riemannian metric,  a volume form and a  vector cross product \cite{G2estruc,Hitchin}.
Another issue is related to generic distributions in dimension $5$.
Let us recall that a rank two distribution $\mathcal{D}$ on a $5$-dimensional manifold $M$ is said to be generic if and only if it is bracket generating with grow vector $(2,3,5)$. That is, the Lie brackets of vector fields in $\mathcal{D}$ span a rank $3$-distribution of $TM$ and triple Lie brackets of vector fields in $\mathcal{D}$
span all $TM$.
The study of such distributions has a long history starting with the ``five variables paper'' by E. Cartan in 1910 \cite{Cartan2}. Rank two distributions arise in the mechanical system of a surface rolling without slipping and twisting on another surface. In this case, the configuration space has a rank two distribution which encodes the no slipping and twisting condition. When both surfaces are round spheres with ratio of their radii $1:3$, the universal double covering of the configuration space is a homogeneous space for the real split form of the exceptional Lie group of type $\mathrm{G}_{2}$.

As mentioned above, our main aim will be to describe the family of reductive $G_2$-homogeneous spaces in   geometrical terms, as well as  relationships between them. We have tried to do it in an accessible  way, while keeping a unified perspective. 
The paper is organized as follows. Section $2$ is devoted to generalities on the octonion division algebra $\OO$, and also to fix  some terminology and notations. For instance,  we  describe $G_2$   and its related Lie algebra $\mathfrak{g}_2$, not only in terms of automorphisms and derivations of the octonion algebra, but also in terms of a convenient (generic) 3-form.
The key result for this paper is Theorem~\ref{th_clasif}, which introduces the complete list of reductive Lie subalgebras of $\mathfrak{g}_{2}.$ We have labeled every such subalgebra as $\mathfrak{h}_{i}$ with $i\in \{1,\dots , 8\}$.  In order to construct the corresponding $G_2$-homogeneous manifold, we consider for every Lie subalgebra $\mathfrak{h}_{i}$ of $\mathfrak{g}_2$ the unique closed connected Lie subgroup $H_i$ with Lie algebra  $\mathfrak{h}_{i}$ by means of
Theorem~\ref{th_clasif} and \cite[Theorem~3.19]{Warner}.
All of these algebras  $\mathfrak{h}_{i}$ admit natural descriptions in terms of quaternions, complex numbers and derivations, with the exception of $\hh_8$, a principal three-dimensional subalgebra of $\mathfrak{g}_2$. The description of $\hh_8$ has to wait until Proposition~\ref{prop_TDSppal}, after devoting an effort  in Section~\ref{se_ppal} to understand the main properties and the existence of the principal subalgebras of compact real forms. In Theorem~\ref{th_clasif},   the corresponding 
   3-dimensional simple   subalgebra of $\mathfrak{g}_2$ is described by its properties, namely, the fact  that $\OO_0$ is an absolutely irreducible  module for it. Its   uniqueness  is clarified in  Proposition~\ref{pr_conj}.

The central part of this paper is Section~$3$, where we develop one by one every reductive $G_2$-homogeneous space. We have included old and new geometric descriptions of such spaces and several features and uses in Differential Geometry. 
Section $3$ begins with some generalities on homogeneous manifolds. Then, we realize $G_2$ as a hypersurface of  
the Stiefel manifold $V_{7,3}$ of all orthonormal $3$-frames in $\mathbb{R}^{7}$. The long string of reductive $G_2$-homogeneous manifolds starts with arguably the best known examples in the literature: the $8$-dimensional quaternion-K\"ahler symmetric space $G_{2}/\SO(4)$  and the nearly K\"ahler six dimensional sphere $\mathbb{S}^{6}\cong G_{2}/\SU(3).$ The   directed tree in Figure~\ref{tree} describes the relationships between the manifolds in Section $3$. According to the dimensions, the Lie group $G_2$ is in the top (the root) and $G_{2}/\SO(4)$ and the nearly K\"ahler sphere $\mathbb{S}^{6}\cong G_{2}/\SU(3)$ are in the bottom. Our directed tree has three leaves: $G_{2}/\SO(4)$, the nearly K\"ahler sphere $\mathbb{S}^{6}\cong G_{2}/\SU(3)$ and  the irreducible space $G_{2} /\SO(3)^{\textrm{irr}}$. Moreover, every arrow denotes a fiber bundle projection $G/H_{i}\to G/H_{j}$  with standard fiber $H_{j}/H_{i}$ whenever $H_{i}$ is a closed subgroup of $H_j$.

There are two branches with end on $\mathbb{S}^6$. The first one has two nodes: the unit tangent bundle $\mathcal{U}\mathbb{S}^{6}$ and the complex quadric $Q_5\subset \mathbb{C}P^6$ (excluding  the root, $G_2$).  Alternative descriptions and properties of the manifolds in this branch can be found in Sections~\ref{elM2}, \ref{elM3} and \ref{elM6}. The other branch with end on $\mathbb{S}^{6}$ has only the node $G_{2}/\SO(3)$. This $G_2$-homogeneous manifold $ G_2/\SO(3)$  is almost unknown, or at least   one  would think so from its virtually non-existent appearances in the literature.  Section~\ref{elM7} is devoted to this case.
Three branches end on the $8$-dimensional quaternion-K\"ahler symmetric space $G_{2}/\SO(4)$  (compare again with  \cite[Example~13.6.8]{libroBG}). Two branches agree with the case of $\mathbb{S}^{6}$. The third one has two nodes: $G_{2}/{\SU(2)^r}$ and $G_{2}/{\U(2)^r}$. Sections~\ref{elM4} and \ref{elM5} provide very concrete descriptions of such manifolds of twistor spaces over $G_{2}/\SO(4)$.
Observe that the first and the third branches with end $G_{2}/\SO(4)$ contain topologically different manifolds. In fact, the two copies of $\U(2)$ and $\SU(2)$ in $G_{2}$ produce quotients with different homotopy types \cite[Example~13.6.8]{libroBG} (see also \cite{Nakata}, which computes their third   homotopy groups   making use of the notion of Dynkin index).
Finally, the manifold $G_{2}/\SO(3)^{\textrm{irr}}$ is a leaf in our tree and its branch reduces only to one node, because the corresponding algebra is at the same time a maximal subalgebra, and minimal among the non abelian reductive ones. The material on this manifold, an isotropy irreducible space, is presented in Section~\ref{se_irr}, where we give the first concrete description (as far as we know) of this homogeneous space. Although Wolf gives in  \cite{Wolf68} a structure theory
and classification for non-symmetric, isotropy irreducible homogeneous spaces, the truth is that the mere apparition of $G_{2}/\SO(3)^{\textrm{irr}}$ in a list leaves the reader with more questions than answers.

 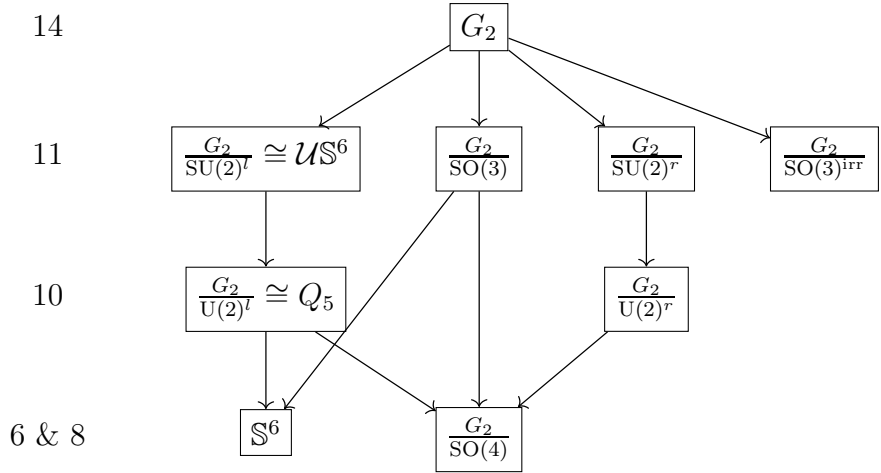
\begin{figure}[!h]
\centering
\begin{tikzpicture}     
  \matrix (m)  [
    matrix of nodes,
    nodes={draw},
    column sep=10mm,
    row sep=10mm,
  ] {
    |[draw=none]|  14   &          &    $G_2$       &\\
    |[draw=none]| 11   &   $\frac{G_2}{\SU(2)^l} \cong\mathcal{U} \mathbb S^6$       & $\frac{G_2}{\SO(3)}$    &  $\frac{G_2}{\SU(2)^r}$    &  $\frac{G_2}{\SO(3)^{\textrm{irr}}}$ \\
    |[draw=none]| 10   & $\frac{G_2}{\mathrm{U}(2)^l} \cong Q_5$  &       &$\frac{G_2}{\mathrm{U}(2)^r}$&  &  \\
    |[draw=none]| 6 $\&$ 8      & $\mathbb S^6$  & $\frac{G_2}{\SO(4)}$  & &   &   \\
  };
  \begin{scope}[
    font=\footnotesize,
    inner sep=.25em,
    every node/.style={fill=white},
  ]
    ;
    \path[commutative diagrams/.cd, every arrow] 
    (m-1-3)  edge  (m-2-2)
(m-1-3) edge  (m-2-3)
(m-1-3)  edge  (m-2-4)
(m-1-3)  edge  (m-2-5)
(m-2-2) edge (m-3-2)
(m-2-4) edge (m-3-4)
(m-3-2) edge (m-4-2) 
(m-3-2) edge (m-4-3)
(m-3-4) edge (m-4-3)
(m-2-3) edge (m-4-2)
(m-2-3) edge (m-4-3)
         ;
  \end{scope}
\end{tikzpicture}
\caption{Reductive homogeneous spaces of $G_2$}\label{tree}
\end{figure}


 \section{ Background on octonion algebras and $\mathfrak{g}_2$}
 
 In this paper we will focus on the real number field, due to the applications to Differential Geometry, although this section can be widely generalized.
 The main facts on Cayley algebras and its derivation algebras can be consulted in \cite{Schafer} and \cite{librorusos}.

 \subsection{ Octonion division algebra}\ 

 The well-known octonion division algebra is denoted here by  $\OO$ and a basis is given by $\{1, e_i:1\le i\le 7\}$, where the multiplication of such elements 
 follow the rules
$$
e_ie_{i+1}= 
e_{i+3},\qquad e_i^2=-1,
$$
for the indices modulo 7.
Moreover, if $ e_{ i}e_{j }=e_{k }$, then we also have $e_{j }e_{ k}=e_{ i}$ and $e_{k }e_{ i}=e_{ j}$, and $e_ie_j=-e_je_i$ for
$i\ne j$.  Alternatively, we can take $\OO=\HH\oplus\HH{\bf{l}}$, where $\HH=\langle 1,\bf{i},\bf{j},\bf{k}\rangle$ denotes the usual quaternion algebra 
with product, for any $q_1,q_2\in\HH$,
\begin{equation}\label{eq_CD}
q_1(q_2{\bf{l}})=(q_2q_1){\bf{l}},\qquad
(q_2{\bf{l}})q_1=(q_2\overline{q_1}){\bf{l}}\qquad \text{ and } \qquad 
(q_1{\bf{l}})(q_2{\bf{l}})=-\overline{q_2}q_1.
\end{equation}

Recall that $\OO$ is a quadratic algebra, that is,  every $x=a_01+\sum_{i=1}^7a_ie_i \in \OO$ satisfies the following second degree polynomial equation
$$
x^2-t(x)x+n(x)1=0,
$$
where $\bar x=a_01-\sum_{i=1}^7a_ie_i$, the trace is given by $t(x)=x+\bar x=2a_0$ and the norm by $n(x)=x\bar x= \sum_{i=0}^7{a_i}^2$.
 Moreover the norm is multiplicative, $n(xy)=n(x)n(y)$ for all $x,y\in\OO$, so that $\OO$ is a composition algebra. 
 We will also denote by $n\colon \OO\times\OO\to\RR$  the polar form $n(x,y)=\frac12\big(n(x+y)-n(x)-n(y)\big)$ related to the positive definite quadratic form $n$. Every $x\in \OO \setminus \{0\}$ has an inverse given by $x^{-1}=\bar x/n(x)$. 
 The octonion algebra is  an important example of nonassociative algebra: if we denote the associator by $(x,y,z):=(xy)z-x(yz)$, note that, for instance, $({\bf{i}},{\bf{j}},{\bf{l}})=2{\bf{k}}{\bf{l}}\ne0$.

\begin{remark} \label{re_subalg}
A remarkable property of the octonion algebra is that every subalgebra $\mathcal Q$ of $\OO$ of dimension 4 is isomorphic to $\HH$. Moreover, if we take  $v\in\mathcal Q^\perp$ with $n(v)=1$, then the isomorphism $f\colon \mathcal Q\to \HH$ is extended to an automorphism  $ \OO= \mathcal Q\oplus \mathcal Q v\to\OO=\HH\oplus\HH\ll$ by 
means of $q_1+q_2v\mapsto f(q_1)+f(q_2)\ll$. The proof is consequence of the fact that $v$ and $\mathcal Q$ satisfy the relations in Eq.~\eqref{eq_CD} (see for instance \cite[Chapter~2, Lemma~6]{librorusos}).
\end{remark}

\subsection{Cross products  and $3$-forms  }

The projection of the   product of the octonion algebra over the subspace of the zero trace elements $\OO_0=\{x\in \OO: t(x)=0\}$ defines a cross product on $\OO_0$ as follows
$$
\times\colon \OO_0\times \OO_0\to \OO_0,\qquad x\times y=\mathrm{pr}_{\OO_0}(xy)=xy-\frac12 t(xy)1.
$$
That is, a binary product satisfying $n(x\times y,x)=n(x\times y,y)=0$ and 
$$
n(x\times y)=\left| \begin{array}{cc} n(x,x)&n(x,y)\\n(y,x)&n(y,y)  \end{array}    \right|.
$$

 Equivalently we have a cross product in $\RR^7$ given by the natural identification as vector spaces $\OO_0\to\RR^7$,
$e_i\mapsto e_i$ (now $\{e_i\}_{i=1}^7$ denotes the canonical basis of $\RR^7$).
Moreover, the trilinear map $\Omega\colon\OO_0\times\OO_0\times\OO_0\to\RR$ defined by 
$$
\Omega(x,y,z)=n(x,y\times z)=n(x,yz),
$$
 is alternating and so defines a 3-form. It is frequently called the associative calibration on $\OO_0$ \cite{Harvey}.

\subsection{The automorphism group }

F.~Engel proved in \cite{Engel} that the compact Lie group $G_2$ is the isotropy
group of a generic 3-form in 7 dimensions (for instance, $\Omega$ is a  generic 3-form).
On the other hand, E.~Cartan proved that $G_2$ is also the automorphism group of the octonion algebra \cite{Cartan}. We use here both approaches: the classification of the reductive homogeneous spaces of $G_2$ in \cite{LYg2} follows the viewpoint of  the automorphism group of the octonion algebra, but we will profusely use  the  $3$-form $\Omega$   to provide concrete descriptions of such homogeneous spaces in Section~$3$.

So, we first think  of $G_2$   as the automorphism group 
$$\mathrm{Aut}(\OO)=\{f\in\mathrm{GL}(\OO):f(xy)=f(x)f(y) \ \forall x,y\in\OO\}.
$$ 
Since every automorphism preserves the norm, we have $\mathrm{Aut}(\OO)\subset \SO(\OO,n).$ 
Moreover, every automorphism $f$ satisfies $f(1)=1$, and taking into account that  $\OO_0= \langle1\rangle^\perp$, we get that $f$ can be restricted to $\OO_0$. This restriction determines the action of $f$ on $\OO$. Hence we can also consider $\mathrm{Aut}(\OO)$ as a subgroup of $\SO(\OO_0,n)$.

Recall that $\mathrm{GL}(\RR^7)\equiv \mathrm{GL}(\OO_0)$ acts on the set of alternating trilinear maps $\omega\colon
\OO_0\times \OO_0\times \OO_0\to\RR$
by $(f\cdot \omega)(x,y,z)=\omega(f^{-1}(x),f^{-1}(y),f^{-1}(z))$. The orbit of $\Omega$ is open and the group $G_2=\mathrm{Aut}(\OO)$ is isomorphic to the isotropy group $\{f\in\mathrm{GL}(\OO_0):f\cdot \Omega=\Omega\}$, by means of $f\mapsto f\vert_{\OO_0}$.

\subsection{The exceptional Lie algebra $\mathfrak{g}_2$  }

The 14-dimensional simple Lie algebra $\mathfrak{g}_2$  is the Lie algebra of derivations of the octonion algebra
$$
\der(\OO)=\{d\in\mathfrak{gl}(\OO):d(xy)=d(x)y+xd(y)\ \forall x,y\in\OO\},
$$
endowed with the usual commutator.
Similarly to the case of the group, the map $d\mapsto d\vert_{\OO_0}$ provides an isomorphism between $\der(\OO)$ and the Lie algebra
$$
  \{d\in\mathfrak{gl}(\OO_0):\Omega(d(x),y,z)+\Omega(x,d(y),z)+\Omega(x,y,d(z))=0\  \forall x,y,z\in\OO_0\}.
$$

In general, the derivations of an algebra are not easy to describe. In the case of the octonion algebra, the concrete computations are carefully developed in \cite[Chapter~8]{Schafer}. We will follow here this description. 
Let us denote by $L_x,R_x\colon \OO\to \OO$ the left and right multiplication operators given by $L_x(y)=xy$ and $R_x(y)=yx$. They are not derivations but behave well with respect to the norm, that is, if $x\in\OO_0$,
$$
L_x,R_x\in\mathfrak{so}(\OO,n)=\{f\in\mathfrak{gl}(\OO):n(f(x_1),x_2)+n(x_1,f(x_2))=0\ \forall x_i\in\OO\}.
$$
Now, let us denote by 
$$
D_{x,y}:=[L_x,L_y]+[L_x,R_y]+[R_x,R_y].
$$
Every $D_{x,y}$  is a derivation of $\OO$ such that $D_{x,y}(z)=[z,[x,y]]-3(x,z,y)$. Moreover, these operators span the whole derivation algebra, that is,
$$
\der(\OO)=\left\{\sum_{i=1}^kD_{x_i,y_i}:x_i,y_i\in\OO,k\in\mathbb N\right\}.
$$
Besides, the unique nonzero $\mathfrak{g}_2$-invariant map  $\OO_0\times \OO_0\to \der(\OO)$, up to scalar multiple, is given precisely by $(x,y)\mapsto D_{x,y}$. That is, for any $d\in \der(\OO)$, we have
\begin{equation}\label{eq_corcheteconds}
[d,D_{x,y}]=D_{d(x),y}+D_{x,d(y)}.
\end{equation}

\subsection{ Reductive subalgebras of $\mathfrak{g}_2$}

Recall that  a subalgebra $\hh$ of a Lie algebra $\mathfrak{g}$ is said to be \emph{reductive} if $\mathfrak{g}$ is completely reducible as $\hh$-module. 
 In particular there is  an $\hh$-submodule $\mm$ of $\mathfrak{g}$ such that $\mathfrak{g}=\hh\oplus\mm$, that is,
 $(\mathfrak{g},\hh)$ is a reductive pair.
 Take care because the converse is not necessarily true if $\hh$ has radical, since a complementary subspace $\mm$ of $\hh$ could be not completely reducible as $\hh$-module.

In order to describe the reductive subalgebras of $\mathfrak{g}_2$, we consider the nondegenerate Hermitian form
\begin{equation}\label{eq_sigma}
\sigma\colon\OO\times\OO\to\CC, \quad \sigma(x,y)= n(x,y)-n({\bf i}x,y){\bf i},
\end{equation}
 and the automorphism 
 $\tau\in\mathrm{Aut}(\OO)$   given by 
 \begin{equation}\label{290922A}
 \tau(q_1+q_2{\bf l})=q_1+({\bf i}q_2){\bf l},
 \end{equation} for $q_i\in\HH$.
Then, as a consequence  of \cite[Theorem~2.1, Corollary~3.5, Proposition~3.6]{LYg2}, we get:

\begin{theorem}\label{th_clasif}
If $\hh$ is a nonabelian proper reductive  subalgebra of the Lie algebra $\mathfrak{g}_2=\der(\OO)$, then either
\begin{itemize}
\item[a)]
$\hh$ is a 3-dimensional simple Lie algebra and $\OO_0$ is an absolutely irreducible $\hh$-module, or \smallskip 
\item[b)]
$\hh$ is conjugated (by an automorphism of $\mathfrak{g}_2$) to one and only one of the subalgebras in the following list:
\begin{itemize}
\item  $\hh_1=\{d\in\mathfrak{g}_2:d(\HH)\subset\HH\}\cong\mathfrak{so}(\HH^\perp,n)\cong\mathfrak{so}(4)$;
\item  $\hh_2=\{d\in\mathfrak{g}_2:d(\HH)\subset\HH,d(\CC)=0\}\cong\mathfrak{u}(\HH^\perp,\sigma)\cong\mathfrak{u}(2)$;
\item  $\hh_3=\{d\in\mathfrak{g}_2:d(\HH)=0\}\cong\mathfrak{su}(\HH^\perp,\sigma)\cong\mathfrak{su}(2)$;
\item  $\hh_4=\{d\in\mathfrak{g}_2:d\tau=\tau d\}\cong\mathfrak{u}(\HH,\sigma)\cong\mathfrak{u}(2)$;
\item  $\hh_5=\mathfrak{cent}_{\hh_1}(\hh_3)=\{d\in\hh_1:[d,\hh_3]=0\}\cong\mathfrak{su}(\HH,\sigma)\cong\mathfrak{su}(2)$;
\item  $\hh_6=\{d\in\mathfrak{g}_2:d(\CC)=0\}\cong\mathfrak{su}(\CC^\perp,\sigma)\cong\mathfrak{su}(3)$;
\item  $\hh_7=\{d\in\mathfrak{g}_2:d(\HH)\subset\HH, \, d({\bf l})=0\}\cong\mathfrak{so}(\HH_0{\bf l},n)\cong\mathfrak{so}(3)$.
\end{itemize}
\end{itemize}
In   case a),   $\mathfrak{g}_2$ is the sum of $\hh$ and an absolutely irreducible $\hh$-module of dimension 11. 
\end{theorem}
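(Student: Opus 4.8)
The plan is to reduce the statement to a conjugacy classification of semisimple subalgebras and their reductive extensions, treating the absolutely irreducible three-dimensional case of a) separately. First I would exploit compactness: since the Killing form of $\mathfrak{g}_2$ is negative definite, the orthogonal complement of any $\hh$-submodule is again a submodule, so \emph{every} subalgebra is automatically reductive, and restricting the adjoint action to $\hh\subseteq\mathfrak{g}_2$ shows that $\hh$ is itself a reductive Lie algebra, $\hh=\mathfrak{z}(\hh)\oplus[\hh,\hh]$ with $[\hh,\hh]$ semisimple. As $\hh$ is nonabelian, $\mathfrak{s}:=[\hh,\hh]$ is a nonzero semisimple subalgebra of rank at most $2$, and $\hh$ is recovered from $\mathfrak{s}$ by adjoining a torus inside the centralizer $\mathfrak{z}_{\mathfrak{g}_2}(\mathfrak{s})$. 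The problem thus splits into classifying the nonzero semisimple $\mathfrak{s}$ up to conjugacy, and then determining the admissible central extensions.

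For the semisimple part I would invoke Dynkin's determination of the maximal subalgebras of $\mathfrak{g}_2$: up to conjugacy these are $\mathfrak{su}(3)$ (the $A_2$-subalgebra, our $\hh_6$), $\mathfrak{so}(4)\cong\mathfrak{su}(2)\oplus\mathfrak{su}(2)$ (the $A_1\times A_1$-subalgebra $\hh_1$), and the principal three-dimensional subalgebra. Every proper nonzero semisimple subalgebra is then conjugate into one of these three, reducing the list to the subalgebras of $\mathfrak{su}(3)$, of $\mathfrak{so}(4)$, and of the principal $\mathfrak{so}(3)$ (which, being $3$-dimensional simple, contributes only itself). I would realize each candidate concretely in the octonion model as the stabilizer in $\der(\OO)$ of a suitable subalgebra or flag of $\OO$ --- $\HH$, $\CC$, $\ll$, and combinations thereof --- matching the definitions of $\hh_1,\dots,\hh_7$; the isomorphism types then follow by reading off the induced action on the relevant subspaces together with the Hermitian form $\sigma$. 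Completeness of the four conjugacy classes of $3$-dimensional simple subalgebras ($\hh_3,\hh_5,\hh_7$ and the principal one) I would pin down by Dynkin's classification of $A_1$-subalgebras of $\mathfrak{g}_2$.

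Next I would settle the conjugacy assertions. Existence of the conjugating automorphism comes from the transitivity already present in the octonion picture --- any quaternion subalgebra and any imaginary unit can be carried to standard position by an element of $\mathrm{Aut}(\OO)$, as in Remark~\ref{re_subalg} --- together with conjugacy of maximal tori and, in the principal case, Kostant's theorem that principal three-dimensional subalgebras form a single conjugacy class. The genuinely delicate point is uniqueness, namely that $\hh$ is conjugate to \emph{exactly} one member of the list: here I would use the isomorphism class of $\OO_0$ as an $\hh$-module as a conjugacy invariant. This simultaneously separates the four $3$-dimensional simple subalgebras, since the restrictions of $\OO_0$ to $\hh_3,\hh_5,\hh_7$ and to the principal subalgebra have pairwise distinct decompositions into irreducibles (distinguished, for instance, by the dimension of the fixed subspace and by the dimensions and types of the isotypic pieces); for the principal subalgebra $\OO_0$ is the unique $7$-dimensional irreducible module, and its absolute irreducibility forces precisely this embedding.

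Finally, for the module claim in case a), I would use Kostant's decomposition of the adjoint module under a principal three-dimensional subalgebra: over $\CC$ one has $\mathfrak{g}_2^{\CC}\cong\bigoplus_i V_{2m_i}$, where the $m_i$ are the exponents of $\mathfrak{g}_2$, namely $1$ and $5$. Hence $\mathfrak{g}_2^{\CC}\cong V_2\oplus V_{10}$, with $V_2$ of dimension $3$ and $V_{10}$ of dimension $11$; both highest weights are even, so both modules are of real type and descend to absolutely irreducible real modules. This yields $\mathfrak{g}_2=\hh\oplus\mm$ with $\hh\cong V_2$ the $3$-dimensional adjoint module and $\mm$ absolutely irreducible of dimension $11$, as claimed. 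The main obstacle throughout is not the identification of any individual subalgebra but the global bookkeeping: proving that the module invariants above really exhaust all possibilities, and that no two entries of the list fall into the same conjugacy class.
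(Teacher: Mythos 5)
Your route is genuinely different from the paper's. The paper's own proof is only a sketch deferring to \cite{LYg2}, but its key mechanism is clear: a subalgebra $\hh\le\mathfrak{g}_2$ is reductive precisely when $\OO_0$ is a completely reducible $\hh$-module, and whenever $\OO_0$ is a reducible $\hh$-module one shows (adding the unit) that a composition subalgebra of $\OO$ isomorphic to $\HH$ or to $\CC$ is $\hh$-invariant; moving that subalgebra to standard position by an automorphism of $\OO$ (Remark~\ref{re_subalg}) places $\hh$ inside $\hh_1$ or $\hh_6$, and the list is finished by inspecting those two stabilizers. You instead classify the semisimple part $[\hh,\hh]$ abstractly via Dynkin's maximal subalgebras and then adjoin central tori from the centralizer. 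Your opening observation --- that over the compact form every subalgebra is automatically reductive, so the hypothesis is vacuous here --- is correct and clarifying; the centralizer computation ($\mathfrak{z}_{\mathfrak{g}_2}(\hh_3)=\hh_5$ and symmetrically, all other centralizers trivial) does recover exactly $\hh_2$, $\hh_4$ and $\hh_1$ as the possible nonsemisimple entries; and your proposed invariant, the isomorphism type of $\OO_0$ as an $\hh$-module, does separate the classes (the four TDS classes give $3V(0)\oplus 2V(1)$, $V(2)\oplus 2V(1)$, $2V(2)\oplus V(0)$ and $V(6)$ after complexification). What your approach buys is independence from explicit octonionic computations; what the paper's buys is that the conjugating automorphism is produced constructively inside $\mathrm{Aut}(\OO)$ with no appeal to the complex classification.

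The genuine gap is the descent from complex conjugacy to conjugacy in the compact real form. Dynkin's lists \cite{Dyn2} and Kostant's uniqueness of the principal triple \cite{Kostant} are statements about $\mathfrak{g}_2^{\CC}$ up to inner automorphism of $\mathfrak{g}_2^{\CC}$. To conclude that two subalgebras of the compact $\mathfrak{g}_2$ whose complexifications are conjugate are themselves conjugate by an automorphism of $\mathfrak{g}_2$ --- and that a single complex class does not split into several real ones --- requires an argument you never supply. The paper itself treats this as the delicate point: Proposition~\ref{pr_conj} gives a bespoke real proof of uniqueness of the principal subalgebra precisely because no suitable reference exists, and the same issue affects every step of your bookkeeping (completeness of the three maximal subalgebras over $\RR$, the count of $\mathfrak{su}(2)$-classes, the fusion or non-fusion of the three $\SO(4)$-classes of $\mathfrak{su}(2)$ under the larger group). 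For $\hh_1,\dots,\hh_7$ the gap is reparable by the transitivity of $\mathrm{Aut}(\OO)$ on quaternion subalgebras and on pairs consisting of a quaternion subalgebra and an imaginary unit, as you suggest; but to invoke that transitivity you must first extract the invariant copy of $\HH$ or $\CC$ from the $\hh$-module structure of $\OO_0$, which is exactly the paper's argument reentering through the back door. Either make the real descent explicit or run the module-theoretic argument directly.
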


We only provide here a rough sketch of the proof jointly with several relevant features to be used later.

\begin{proof}  
A subalgebra $\hh$ of $\mathfrak{g}_2$ turns out to be reductive if and only if $\OO_0$ is a completely reducible $\hh$-module.
The proof in \cite{LYg2}  is based on the fact that, if
$\OO_0$ is not an irreducible $\hh$-module, then it has a submodule, and it can be checked (adding the unit) that a subalgebra isomorphic to either $\HH$ or $\CC$ remains invariant. 

Taking into account that such proof is realized in a more general context, we  add  a comment  here on the precise isomorphisms:
$$
d\in\hh_i\mapsto d\vert_{\HH^\perp}, \quad 
d\in\hh_j \mapsto  (R_{{\bf l}}^{-1}dR_{{\bf l}})\vert_{\HH },\quad 
d\in\hh_6\mapsto d\vert_{\CC^\perp}, \quad 
d\in\hh_7\mapsto d\vert_{\HH_0{\bf l}}, 
$$
for $i=1,2,3$, $j=4,5$. (Recall that $R_{{\bf l}}$ is the right multiplication by ${\bf l}\in\OO$.)
\end{proof}

More explicit descriptions of  the  elements of the subalgebras in Theorem~\ref{th_clasif} in terms of our derivations $D_{x,y}$
can be achieved   as follows.
First, note that the decomposition 
$\OO=\OO_{\bar0}\oplus \OO_{\bar1}$,  for $\OO_{\bar0}=\HH$ and $\OO_{\bar1}=\HH\bf l$, is a $\ZZ_2$-grading on $\OO$. In particular, this $\ZZ_2$-grading induces a 
$\ZZ_2$-grading on the Lie algebra $\der(\OO)$, with homogeneous components:
\begin{equation}\label{230922A}
\begin{array}{l}
\der(\OO)_{\bar0}=\{d\in\mathfrak{g}_2:d(\OO_{\bar i})\subset \OO_{\bar i}\ \forall\, i=0,1\}=\hh_1=D_{\OO_{\bar0},\OO_{\bar0}}+D_{\OO_{\bar1},\OO_{\bar1}},\\
\der(\OO)_{\bar1}=\{d\in\mathfrak{g}_2:d(\OO_{\bar i})\subset \OO_{\bar i+\bar 1}\ \forall\, i=0,1\}=D_{\OO_{\bar0},\OO_{\bar1}}.
\end{array}
\end{equation}
Taking into account  that $\mathfrak{so}(\HH,n)=L_{\HH_0}\oplus R_{\HH_0}\cong 2\mathfrak{su}(2)$ is a sum of two simple ideals, we find that
$$
\der(\OO)_{\bar0}=\hh^l\oplus\hh^r
$$
is also a sum of two simple ideals: $ \hh^l=\{d_a^l:a\in\HH_0\}$ and $\hh^r=\{d_a^r:a\in\HH_0\}$,
where  the derivations $d_a^l$ and $d_a^r$ are determined  by $d_a^l\vert_{\HH^\perp}= R_{{\bf l}}L_aR_{{\bf l}}^{-1} $ and 
$d_a^r\vert_{\HH^\perp}= R_{{\bf l}}R_aR_{{\bf l}}^{-1} $. We can explicitly write down that
\begin{equation}\label{eq_defderivlyr}
d_a^l(q_1+q_2{\bf l})=(aq_2){\bf l},\qquad d_a^r(q_1+q_2{\bf l})=[a,q_1]+(q_2a){\bf l}.
\end{equation}
The indices $l$ and $r$ simply refer to the respective left and right action on the odd part $\HH^\perp$.\footnote{Note that $l$ and $r$ correspond to $-$ and $+$, respectively, in the literature.}
Now,  we have that 
$\hh_3=\hh^l$ and $\hh_2=\hh^l\oplus\langle{d_{\bf i}^r}\rangle$. Also, $[\hh^l,\hh^r]=0$ so we get that $\hh^r=\hh_5$. 
As Eq.~\eqref{eq_corcheteconds} tells that $[d_a^l,D_{p,q}]=0$ for all $p,q\in\HH$, so that $D_{\HH,\HH}\subset\hh_5 $ and, by dimension count, $D_{\HH,\HH}=\hh_5$. More precisely, we have $D_{p,q}=d_{[p,q]}^r$ for any $p,q\in\HH$. Since $d_{\bf i}^l$ commutes with $\tau$, we also obtain  $\hh_4=\hh^r\oplus\langle{d_{\bf i}^l}\rangle$. Finally, we have  $\hh_6=D_{{\bf i},\langle 1,{\bf i}\rangle^\perp}$ and $\hh_7=\{d_a^l-d_a^r:a\in\HH_0\}$, since $d_a^l({\bf l})=a{\bf l}=d_a^r({\bf l}) $. \smallskip

Note that, in Theorem \ref{th_clasif}, the subalgebra of type a) must be a maximal subalgebra of $\mathfrak{g}_2$, since if there were properly contained in another subalgebra, the complementary submodule would be reducible.  Moreover, it corresponds to the so called \emph{principal subalgebra}, which  is related to some important topics in Lie theory. 
We have not provided       an explicit description of such subalgebra in Theorem \ref{th_clasif}, but only of some of the properties which characterize it, because  it is difficult to achieve a concrete description in terms of derivations of the octonions. Such description will be provided in  Proposition~\ref{prop_TDSppal}, where we will define $\hh_8$. 
It does not particularly help  to understand better the related homogeneous space, but we have added it by completeness. However,   its existence (a general fact in Proposition~\ref{ref_compactas}) and its uniqueness up to conjugation (Proposition~\ref{pr_conj})  will be highly relevant in Section~\ref{se_irr} for studying the isotropy irreducible Wolf space. 
Due to the fact that these algebraic questions are not immediate at all, we will specifically devote Section~\ref{se_ppal}  
to deepen in the knowledge of the principal subalgebras, better well-known in the complex case.

\begin{remark}
All semisimple subalgebras of the complex semisimple Lie algebras were determined by Dynkin in 1952 \cite{Dyn2}.
This paper introduces some important concepts, as the \emph{index} of a subalgebra, an integer number which permits to distinguish different (non-conjugate) embeddings of the same algebra. 
As regards $\mathfrak{g}_2^\CC$, the four types of  three-dimensional subalgebras jointly with their indices appear in \cite[Table 16]{Dyn2}.
It also provides the classification of regular subalgebras, which reduces to a combinatorial problem related to root systems, introducing the notions of $R$-subalgebras and $S$-subalgebras, corresponding in some sense to reducible/irreducible subalgebras respectively. 
Our algebra $\mathfrak{g}_2^\CC$ has no any $S$-subalgebra and it has only one simple subalgebra of rank greater than 1 (of type $A_2$), as listed in \cite[Table 25]{Dyn2}.   
Many of these results are summarized and revisited in Chapter~6 in the encyclopaedia \cite{enci41}. Tables~5 and 6 give the two only maximal subalgebras of rank 2 of $\mathfrak{g}_2^\CC$,
 both of them semisimple,  isomorphic to $\mathfrak{sl}_3(\CC)$ and $\mathfrak{so}_4(\CC)$, corresponding to the fixed subalgebra by an inner automorphism of order 3 and 2 respectively.  The results agree with our situation in the real compact case.
 
 In spite of the very thorough study made by Dynkin, it does not contain an
explicit description of all the subalgebras. In the $\mathfrak{g}_2^\CC$-case, such classification appears in a very recent reference: 
 according to \cite[Theorem 1.1.]{subalgebras}, there are 115 subalgebras up to conjugacy by an inner
automorphism: 
 64 types of regular subalgebras,
2 non-regular semisimple subalgebras and 
 49 types of non-regular solvable subalgebras.
The techniques are combinatorial (the paper proceeds by calculation in the Chevalley basis), and differ very much from the techniques in \cite{LYg2}, since \cite{LYg2} studies only the reductive subalgebras (and the only restriction on the ground field is having zero characteristic).
\end{remark}


\subsection{ The principal three-dimensional subalgebra of $\mathfrak{g}_2$}\label{se_ppal}

The subalgebra considered in item a) in Theorem~\ref{th_clasif} cannot be so easily described as the others, that is,  as the subalgebra of $\der(\OO)$ which leaves invariant some subalgebra or commutes with some automorphism.
One could think that this is a weird subalgebra, but the situation is the opposite: this is the most \lq\lq frequent\rq\rq three-dimensional Lie algebra, corresponding to the so called principal subalgebra. Now we will recall these concepts in detail, not only by completeness, but because we will use the knowledge on principal subalgebras for describing the homogeneous space related to this case.
Some of the information is extracted from \cite[Chapter~6, \S2.3]{enci41}. An imprescindible reference   is  
  \cite{Kostant}, where Kostant  relates the 3-dimensional  principal subalgebras   to many other topics.\smallskip

 If $\mathfrak{g}$ is a complex semisimple Lie algebra, the classification of the three-dimensional simple subalgebras of $\mathfrak{g}$ is equivalent to the classification of nilpotent elements. A triple of elements $\{e,h,f\}\subset\mathfrak{g}$ is called an $\mathfrak{sl}_2$-\emph{triple} if $[h,e]=2e$,
$[h,f]=-2f$ and $[e,f]=h$, that is, they form a canonical basis of a subalgebra of $\mathfrak{g}$ isomorphic to $\mathfrak{sl}_2(\CC)$. 
According to Morozov's theorem, for each   nilpotent element $0\ne e\in\mathfrak{g}$, there is some  $\mathfrak{sl}_2$-{triple}  of $\mathfrak{g}$ containing $e$.
The element $h$ in such $\mathfrak{sl}_2$-{triple}   is called the \emph{characteristic} of $e$. It turns out that the set $\mathfrak N=\{e\in\mathfrak{g}:e \textrm{ nilpotent}\}$ is an algebraic variety of dimension equal to $\dim\mathfrak{g}-\mathrm{rank}\,\mathfrak{g}$. The group $G$ of inner automorphisms of $\mathfrak{g}$ acts on $\mathfrak{N}$ producing a finite number of orbits. There is only one dense orbit, open in $\mathfrak{N}$  in the Zarisky topology, which is called the \emph{principal} orbit (being the biggest). The nilpotent elements in this orbit are also called \emph{principal}, and all of them are obviously conjugated. So the condition for a nilpotent element to be principal is that the dimension of $Z(e)=\{\sigma\in G:\sigma(e)=e\}$ coincides with the rank of $\mathfrak{g}$, or equivalently, the dimension of the centralizer $\mathfrak{z}(e)=\{x\in\mathfrak{g}:[x,e]=0\}$ coincides with the rank of $\mathfrak{g}$. The subalgebra spanned by an $\mathfrak{sl}_2$-{triple} $\{e,h,f\}$ where $e$ is a principal nilpotent is called a \emph{principal subalgebra}.

Not every semisimple element is contained in an $\mathfrak{sl}_2$-{triple}  of $\mathfrak{g}$, that is, not every  semisimple element is a 
 characteristic\footnote{If $h$ is a characteristic of $e$, the set of all the characteristics of $e$ is just $\{\sigma(h):\sigma\in Z(e)\}$.}
  of some nilpotent element. If we have fixed $\hh$ a Cartan subalgebra of $\mathfrak{g}$     and   a system of simple roots $\{\alpha_i:i=1,\dots,l\}$ of   the root system relative to $\hh$, then the characteristic of any nilpotent element of $\mathfrak{g}$ is conjugated to some $h\in\hh$ such that $\alpha_i(h)\in\{0,1,2\}$ for all $i$, although the converse is not true.

\begin{example}
In the complex exceptional algebra $\mathfrak{g}_2^\CC$,     
there are $9$ pairs of elements in  $\{0,1,2\}^2$, but not all of them are $(\alpha_1(h),\alpha_2(h))$ for   $h$ a  characteristic of some nilpotent element.  According to \cite{Dyn2} (\cite{orbits}  for the real -of course non compact- case),
 there are just $4$ orbits of nonzero nilpotent elements, corresponding to the pairs
$$
 (0,1),\quad (1,0),\quad (0,2),\quad (2,2).
$$
As the set of positive roots is $ \{\alpha_1,\alpha_2,\alpha_1+\alpha_2,2\alpha_1+\alpha_2,3\alpha_1+\alpha_2,3\alpha_1+2\alpha_2\}$ ($\alpha_1$ short root), an easy computation with eigenvalues shows then that the decomposition of $\mathfrak{g}_2^\CC$ as a sum of $\mathfrak{sl}_2(\CC)$-modules for the corresponding $\mathfrak{sl}_2(\CC)$ is, respectively,
$$
4V(1)\oplus V(2)\oplus 3V(0),\quad
2V(3)\oplus V(2)\oplus 3V(0), \quad
3V(2)\oplus V(4),\quad
V(2)\oplus V(10),
$$
where $V(n)$ denotes here the irreducible $\mathfrak{sl}_2(\CC)$-module of dimension $n+1$. Hence, the corresponding algebras in Theorem~\ref{th_clasif} are (the complexifications of)
$\hh_3$, $\hh_5$, $\hh_7$ and the three-dimensional algebra described in item a). 

Here it is very easy to compute the dimensions of the centralizers $\mathfrak{z}(e)$ and $\mathfrak{z}(h)$. 
For each $V(n)$, the highest vector (that of weight $n+1$) belongs to $\mathfrak{z}(e)$, and no  other independent element, 
while there is at most one vector of weight $0$, which always belongs to $\mathfrak{z}(h)$. Hence  the dimension of $\mathfrak{z}(e)$ coincides with the number of irreducible modules appearing in the decomposition, while the dimension of $\mathfrak{z}(h)$ coincides with the number of irreducible modules $V(n)$ with $n$ even  appearing in the decomposition. In particular $\dim\mathfrak{z}(e)\ge\dim\mathfrak{z}(h)$,\footnote{Take care with the typo in that formula  in \cite[Proposition~2.4]{enci41}.} 
and, in our cases, 
$$
\dim\mathfrak{z}(e)=8,6,4,2;\qquad
\dim\mathfrak{z}(h)=4,4,4,2.
$$
Thus,  the nilpotent element with characteristic $h$ such that $\alpha_1(h)=2=\alpha_2(h)$ belongs to the principal orbit, since $\dim\mathfrak{z}(e)=2$.
\end{example}
This example illustrates the way of getting the characteristic of a principal nilpotent element. In general, we can construct a principal three-dimensional subalgebra as follows:

\begin{lemma}$($\cite[Chapter~6, \S2.3]{enci41}$)$
Let $\hh$ be a Cartan subalgebra of the semisimple complex Lie algebra $\mathfrak{g}$     and  $\{\alpha_i:i=1,\dots,l\}$  a system of simple roots of $\Phi$,  the root system relative to $\hh$. For each $\alpha\in\Phi$, denote by $t_\alpha\in\hh$ the element determined by $\kappa(t_\alpha,t)=\alpha(t)$, being $\kappa$ the Killing form, and by $h_\alpha=\frac{2t_\alpha}{\kappa(t_\alpha,t_\alpha)}$. For any root space $\mathfrak{g}_\alpha$ with $\alpha\in\Phi^+$, and any $0\ne e_\alpha\in\mathfrak{g}_\alpha$,
choose $f_\alpha\in\mathfrak{g}_{-\alpha}$ such that $[e_\alpha,f_\alpha]=h_\alpha$. 
As the Cartan matrix $C=(\langle \alpha_i,\alpha_j\rangle )$ is invertible, take $\{c_j\}_{j=1}^l\subset\CC$ (in fact, subset of $\mathbb Q$) unique scalars such that 
for any $i=1,\dots,l$, the equation
\begin{equation}\label{ces}
  \sum_{j=1}^l\langle \alpha_i,\alpha_j\rangle c_j=2 
\end{equation}
 holds. Then $\{e,h,f\}$ is a principal $\mathfrak{sl}_2$-triple for
$$
e=e_{\alpha_1}+\dots+e_{\alpha_l},\quad
h=c_1h_{\alpha_1}+\dots+c_lh_{\alpha_l},\quad
f=c_1f_{\alpha_1}+\dots+c_lf_{\alpha_l}.
$$
\end{lemma}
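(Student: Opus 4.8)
The plan is to check the three defining brackets of an $\mathfrak{sl}_2$-triple by direct computation, and then to prove that $e$ is a \emph{principal} nilpotent by analysing the grading that $\ad h$ induces on $\mathfrak{g}$.

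For the bracket $[h,e]$ I would use that $e_{\alpha_i}$ is a root vector, so $[h_{\alpha_j},e_{\alpha_i}]=\alpha_i(h_{\alpha_j})\,e_{\alpha_i}$; from $h_{\alpha_j}=2t_{\alpha_j}/\kappa(t_{\alpha_j},t_{\alpha_j})$ one gets $\alpha_i(h_{\alpha_j})=\langle\alpha_i,\alpha_j\rangle$, the $(i,j)$ entry of the Cartan matrix. Thus $[h,e]=\sum_i\bigl(\sum_j\langle\alpha_i,\alpha_j\rangle c_j\bigr)e_{\alpha_i}$, and Eq.~\eqref{ces} collapses the inner sum to $2$, giving $[h,e]=2e$. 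The identical computation with $f_{\alpha_i}\in\mathfrak{g}_{-\alpha_i}$, where the eigenvalue changes sign, yields $[h,f]=-2f$. For $[e,f]$ the key remark is that $[e_{\alpha_i},f_{\alpha_j}]\in\mathfrak{g}_{\alpha_i-\alpha_j}$, and that for $i\ne j$ the difference of two distinct simple roots is never a root, so this bracket vanishes; only the diagonal terms survive, and $[e_{\alpha_i},f_{\alpha_i}]=h_{\alpha_i}$ by the choice of $f_{\alpha_i}$. Hence $[e,f]=\sum_i c_i h_{\alpha_i}=h$, and $\{e,h,f\}$ is an $\mathfrak{sl}_2$-triple; in particular $e$ is nilpotent, since it is ad-nilpotent in every finite-dimensional representation of the triple (equivalently, $e\in\bigoplus_{\alpha>0}\mathfrak{g}_\alpha$).

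The substantive part is principality. I would first observe that $\alpha_i(h)=\sum_j c_j\langle\alpha_i,\alpha_j\rangle=2$ for every simple root, so $h$ acts on a root space $\mathfrak{g}_\alpha$ with $\alpha=\sum_i n_i\alpha_i$ by the scalar $2\sum_i n_i=2\,\mathrm{ht}(\alpha)$. Thus $\ad h$ gives an even grading $\mathfrak{g}=\bigoplus_{k}\mathfrak{g}_k$ with $\mathfrak{g}_0=\hh$ and $\mathfrak{g}_{\pm2}=\bigoplus_i\mathfrak{g}_{\pm\alpha_i}$, and $e\in\mathfrak{g}_2$. By the characterization recalled before the statement, $e$ is principal if and only if $\dim\mathfrak{z}(e)=\mathrm{rank}\,\mathfrak{g}=l$. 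To compute $\dim\mathfrak{z}(e)=\dim\ker(\ad e)$ I would decompose $\mathfrak{g}=\bigoplus_s V(n_s)$ as a module over the triple: each summand $V(n)$ contributes exactly one highest-weight vector to $\ker(\ad e)$, so $\dim\mathfrak{z}(e)$ equals the number of summands. Writing $a_n$ for the multiplicity of $V(n)$ and using that $h$-weights coincide with grading degrees, for $k\ge0$ one has $\dim\mathfrak{g}_k=a_k+a_{k+2}+\cdots$, hence $a_k=\dim\mathfrak{g}_k-\dim\mathfrak{g}_{k+2}$; summing over even $k\ge0$ telescopes to $\dim\mathfrak{g}_0=\dim\hh=l$. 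Therefore $\dim\mathfrak{z}(e)=l$ and $e$ is a principal nilpotent, so the triple is principal.

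I expect the genuine obstacle to lie in the principality step, not in the bracket identities. One must check that all summands have even highest weight (forced by the even grading), that no trivial summand occurs (equivalently $\ker(\ad e)\cap\hh=0$, since the simple roots separate points of $\hh$), and that the telescoping is legitimate because the grading is finite. A shorter but less self-contained alternative is to argue directly that a nilpotent whose characteristic satisfies $\alpha_i(h)=2$ for all simple roots lies in the dense principal orbit; the telescoping count above avoids appealing to that fact and keeps the argument elementary.
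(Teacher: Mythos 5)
Your proposal is correct and follows essentially the same route as the paper: verify the three brackets using $\alpha_i(h_{\alpha_j})=\langle\alpha_i,\alpha_j\rangle$ and Eq.~\eqref{ces}, observe that $\mathfrak{z}(h)=\hh$ because $\alpha(h)=2\,\mathrm{ht}(\alpha)\ne 0$, and conclude $\dim\mathfrak{z}(e)=\dim\mathfrak{z}(h)=l$ from the fact that all $\ad h$-eigenvalues are even. Your telescoping count merely makes explicit the multiplicity argument the paper cites (and which it spells out in the preceding example), so there is no substantive difference.
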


We include a proof since the  same argument proves that 
$\{\tilde e,h,\tilde f\}$ is a principal $\mathfrak{sl}_2$-triple too, for
$\tilde e=\gamma_1 e_{\alpha_1}+\dots+\gamma_le_{\alpha_l}$
and $\tilde f=\frac{c_1}{\gamma_1}f_{\alpha_1}+\dots+\frac{c_l}{\gamma_l}f_{\alpha_l}$, and
 for any choice of nonzero scalars $\{\gamma_i:i=1,\dots,l\} $.

\begin{proof}
Equation~\eqref{ces} says that $\alpha_i(h)=2$ for any $i$. The fact $[h,e]=2e$, $[h,f]=-2f$ and $[e,f]=h$ is a straightforward computation. 
Note that $\mathfrak{z}(h)=\hh$ since for $\alpha=\sum m_i\alpha_i$ and  $x\in\mathfrak{g}_{\alpha}$,   then $[h,x]=\alpha(h)x=2(\sum_im_i)x\ne0$.
In general, $\dim\mathfrak{z}(e)\ge\dim\mathfrak{z}(h)$, but both dimensions coincide if all the eigenvalues of $h$ are even. This is just the case
since the set of eigenvalues is $\{2(\sum_im_i):\sum m_i\alpha_i\in\Phi\}$. Hence $\dim\mathfrak{z}(e)=\dim\hh=\mathrm{rank}\,\mathfrak{g}$.
\end{proof}

Come back to our setting, real algebras.   
As far as we know, it is not easy to find many references to principal subalgebras of real Lie algebras. 
We will say that a three-dimensional simple subalgebra (usually denoted by TDS in the literature) 
of a simple compact real Lie algebra $\mathfrak{g}$  is  \emph{principal} if  so is its complexification. 
Such TDS is necessarily isomorphic to $\mathfrak{su}_2$, since  
$\mathfrak{g}$ does not  possess nilpotent elements. As a consequence of the previous arguments,

\begin{proposition}\label{ref_compactas}
Any simple compact real Lie algebra  $\mathfrak{g}$  has a principal three-dimensio\-nal  subalgebra.
\end{proposition}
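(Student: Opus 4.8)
The plan is to transfer the construction of the previous Lemma to the compact real form by producing a principal $\slf_2(\CC)$-triple that is invariant under the conjugation of $\mathfrak{g}^\CC$ defining $\mathfrak{g}$. Write $\mathfrak{g}^\CC$ for the complexification of $\mathfrak{g}$, a complex semisimple Lie algebra, and fix a Cartan subalgebra $\hh=\mathfrak{t}^\CC$ of $\mathfrak{g}^\CC$, where $\mathfrak{t}$ is a maximal abelian subalgebra of $\mathfrak{g}$; then all roots are real-valued on the real span $\hh_\RR$ of the coroots, and $\mathfrak{t}=\sqrt{-1}\,\hh_\RR$. Choosing a Weyl basis $\{e_\alpha\}_{\alpha\in\Phi}$ with the standard normalization of the structure constants, the compact form is recovered as the fixed-point set $\mathfrak{g}=(\mathfrak{g}^\CC)^\theta$ of the conjugate-linear involution $\theta$ determined by $\theta|_{\hh_\RR}=-\id$ and $\theta(e_\alpha)=-e_{-\alpha}$; explicitly
$$
\mathfrak{g}=\sqrt{-1}\,\hh_\RR\oplus\bigoplus_{\alpha\in\Phi^+}\Big(\RR(e_\alpha-e_{-\alpha})\oplus\RR\,\sqrt{-1}(e_\alpha+e_{-\alpha})\Big).
$$

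First I would assemble the principal triple as in the Lemma, but exploiting the freedom in the nonzero scalars $\gamma_i$ recorded just after it. Take $f_{\alpha_i}=e_{-\alpha_i}$, so that $[e_{\alpha_i},f_{\alpha_i}]=h_{\alpha_i}$, and recall that the coefficients $c_i$ solving Equation~\eqref{ces} are strictly positive (equivalently, $h$ equals the sum of the positive coroots, so every $c_i>0$). This lets me choose $\gamma_i=\sqrt{c_i}>0$ and set
$$
e=\sum_{i=1}^{l}\sqrt{c_i}\,e_{\alpha_i},\qquad h=\sum_{i=1}^{l}c_i\,h_{\alpha_i},\qquad f=\sum_{i=1}^{l}\sqrt{c_i}\,f_{\alpha_i}.
$$
By the Lemma this $\{e,h,f\}$ is a principal $\slf_2$-triple, so $\mathfrak{s}:=\span{e,h,f}$ is a principal three-dimensional subalgebra of $\mathfrak{g}^\CC$.

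The decisive step is that $\mathfrak{s}$ is $\theta$-stable. Indeed $\theta(h)=-h$ since $h\in\hh_\RR$, and because $\gamma_i=\sqrt{c_i}$ is real with $\gamma_i^2=c_i$, one computes $\theta(e)=-\sum_i\sqrt{c_i}\,e_{-\alpha_i}=-f$ and hence $\theta(f)=-e$. Therefore $\theta$ preserves $\mathfrak{s}$, and $\mathfrak{s}^\theta=\mathfrak{s}\cap\mathfrak{g}$ is a real form of $\mathfrak{s}\cong\slf_2(\CC)$ lying inside the compact algebra $\mathfrak{g}$, namely
$$
\mathfrak{s}^\theta=\span{\,e-f,\ \sqrt{-1}(e+f),\ \sqrt{-1}\,h\,}\cong\suf_2.
$$
Its complexification is the principal subalgebra $\mathfrak{s}$, so by the definition of principality in the compact case $\mathfrak{s}^\theta$ is a principal TDS of $\mathfrak{g}$, which is exactly the assertion.

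I expect the two genuinely nontrivial ingredients to be the ones I would only cite: the existence of a Weyl basis realizing the compact conjugation through $\theta(e_\alpha)=-e_{-\alpha}$ (the usual normalization of the structure constants in the construction of compact real forms), and the positivity $c_i>0$, which is precisely what makes the choice $\gamma_i=\sqrt{c_i}$ real and forces the symmetric relation $\theta(e)=-f$. Everything else is formal: once a principal triple is arranged to be $\theta$-stable, its fixed part is automatically a compact real form, since a $\theta$-stable semisimple complex subalgebra always meets $\mathfrak{g}$ in a compact real form of itself. Thus the whole difficulty is concentrated in making the principal triple compatible with $\theta$, and the positivity of the $c_i$ is exactly what allows this.
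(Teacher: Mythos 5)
Your proposal is correct and is essentially the paper's own argument: both exploit the positivity of the $c_i$ (via the positivity of the entries of $C^{-1}$) to set $\gamma_i=\sqrt{c_i}$, and the three elements $e-f$, $\sqrt{-1}(e+f)$, $\sqrt{-1}\,h$ spanning your $\mathfrak{s}^\theta$ are exactly the paper's $x$, $y$, $z$. The only difference is cosmetic: you phrase the compatibility with the compact form as $\theta$-stability of the complex principal triple, whereas the paper directly exhibits the three real generators inside $\mathfrak{g}$.
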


\begin{proof}
Take, as in the previous lemma, a Cartan subalgebra of the complex Lie algebra $ \mathfrak{g}^\CC\equiv \mathfrak{g}\otimes_{\RR}\CC$, a set of simple roots $\{\alpha_1,\dots,
\alpha_l\}$ and an $\mathfrak{sl}_2$-triple  $\{e_\alpha,f_\alpha,h_\alpha\}\subset \mathfrak{g}^\CC$ adapted to the root decomposition  such that $\alpha(h_\alpha)=2$
for any root $\alpha$. We can assume (see, for instance, \cite[1.3 Theorem]{apsAlb}) that 
$$
\mathbf{i}h_{\alpha_i},e_{\alpha_i}-f_{\alpha_i},\mathbf{i}(e_{\alpha_i}+f_{\alpha_i})\in\mathfrak{g}
$$
for any $i=1,\dots,l$. 
If $C=(\langle \alpha_i,\alpha_j\rangle )$ denotes the Cartan matrix of $ \mathfrak{g}^\CC$, then the coefficients in the inverse $C^{-1}$ are positive \cite[1.2.1. Proposition]{Leites},
and hence all $c_i$ are positive (since the column vector $(c_i)_{i=1}^l$ is twice the vector obtained summing the columns of  $C^{-1}$). For $\gamma_i\in \RR$ such that $\gamma_i^2=c_i$, take $\mathfrak{s}=\mathrm{span}\,\langle x,y,z\rangle\subset\mathfrak{g}
$ for
$$
z:=\sum_{i=1}^lc_i\mathbf{i}h_{\alpha_i},\quad
x:=\sum_{i=1}^l\gamma_i\big(e_{\alpha_i}-f_{\alpha_i}\big),\quad
y:=\sum_{i=1}^l\gamma_i\big(\mathbf{i}(e_{\alpha_i}+f_{\alpha_i})\big).
$$
Now simply note that $\{\tilde e,h,\tilde f\}$ is a principal $\mathfrak{sl}_2$-triple in  $\mathfrak{s}^\CC$, for $h=-\mathbf{i}z$,
$\tilde e=\frac12(x-\mathbf{i}y)$ and $\tilde f=-\frac12(x+\mathbf{i}y)$.
\end{proof}

In our concrete case $\mathfrak{g}=\mathfrak{g}_2$, we can go further and provide an explicit description of a principal subalgebra in terms of our operators $D_{x,y}$, as we did for $\hh_i$, $i=1,\dots,7$. The following is a straightforward computation.
\begin{lemma}\label{le_elh}
The derivation
\begin{equation}\label{hppal}
h:=\frac16\big(4D_{\bf{j},\bf{k}}+5D_{\bf{l},\bf{il}}\big)\in\mathfrak{g}_2
\end{equation}
acts as follows:
$$
\bf{ i}\mapsto 0, \quad
\bf{j }\mapsto \bf{ k}, \quad
\bf{k }\mapsto -\bf{ j}, \quad
\bf{l }\mapsto 2\bf{ il}, \quad
\bf{il }\mapsto -2\bf{l },  \quad
\bf{ jl}\mapsto 3\bf{kl }, \quad
\bf{ kl}\mapsto -3\bf{jl }.
$$
So the eigenvalues of $h\otimes 1\in\mathfrak{g}_2^\CC$ acting on $\OO_0^\CC=\OO_0\otimes_\RR\CC$ are all different, namely, $\{0,\pm\bf{i},\pm2\bf{i},\pm3\bf{i}\}$.
\end{lemma}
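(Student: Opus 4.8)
The plan is to read the matrix of $h$ off directly from its definition \eqref{hppal}, by evaluating the two derivations $D_{\jj,\kk}$ and $D_{\ll,\ii\ll}$ on the standard basis $\ii,\jj,\kk,\ll,\ii\ll,\jj\ll,\kk\ll$ of $\OO_0$ and then diagonalizing the resulting operator over $\CC$. Each $D_{x,y}$ is computed from $D_{x,y}=[L_x,L_y]+[L_x,R_y]+[R_x,R_y]$ using only the Cayley--Dickson rules \eqref{eq_CD} and the quaternion table, so the whole argument is a finite bookkeeping exercise.

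First I would record the two inner commutators: a one-line check gives $[\jj,\kk]=2\ii$, and through \eqref{eq_CD} one gets $[\ll,\ii\ll]=2\ii$ as well. Hence, on expanding the operator definition, both $D_{\jj,\kk}$ and $D_{\ll,\ii\ll}$ carry the same bracket term built from the common value $2\ii$ and differ only through their associators $(\jj,z,\kk)$ and $(\ll,z,\ii\ll)$; these associators are the only genuinely nonassociative part of the computation. Two remarks cut down the work. Since $\OO=\HH\oplus\HH\ll$ is a $\ZZ_2$-grading with $\jj,\kk\in\HH$ and $\ll,\ii\ll\in\HH\ll$, both derivations lie in $\der(\OO)_{\bar0}$ by \eqref{230922A} and therefore preserve $\HH$ and $\HH\ll$; and, because $\HH$ is associative, the associator $(\jj,z,\kk)$ vanishes for every $z\in\HH$, so on $\HH_0$ the derivation $D_{\jj,\kk}$ reduces to $z\mapsto[2\ii,z]$, annihilating $\ii$ and rotating $\span{\jj,\kk}$.

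Carrying out the (short) evaluation, each derivation annihilates $\ii$ and is block diagonal for the refinement $\OO_0=\span{\ii}\oplus\span{\jj,\kk}\oplus\span{\ll,\ii\ll}\oplus\span{\jj\ll,\kk\ll}$, acting as a skew rotation on each $2$-plane. Forming $\tfrac16\big(4D_{\jj,\kk}+5D_{\ll,\ii\ll}\big)$ then yields exactly the asserted action: a rotation of speed $1$ on $\span{\jj,\kk}$, speed $2$ on $\span{\ll,\ii\ll}$, speed $3$ on $\span{\jj\ll,\kk\ll}$, and $0$ on $\span{\ii}$. Complexifying, a speed-$s$ rotation of a real $2$-plane contributes the conjugate pair of purely imaginary eigenvalues $\pm s\ii$, while $\ii$ spans the kernel; thus the spectrum of $h\otimes 1$ on $\OO_0^\CC$ is $\{0,\pm\ii,\pm2\ii,\pm3\ii\}$, which are pairwise distinct.

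The only real difficulty is sign bookkeeping: keeping track of the conjugations and sign changes that \eqref{eq_CD} introduces in the nonzero associators $(\jj,z,\kk)$ and $(\ll,z,\ii\ll)$ for the four basis vectors $\ll,\ii\ll,\jj\ll,\kk\ll$ of $\HH\ll$, where the products genuinely fail to associate. Everything else is finite and mechanical.
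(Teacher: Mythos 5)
Your proposal is correct and is essentially the paper's own approach: the paper gives no argument beyond declaring the lemma ``a straightforward computation,'' and your organization of that computation via the common commutator $[\jj,\kk]=[\ll,\ii\ll]=2\ii$, the $\ZZ_2$-grading forcing both derivations into $\der(\OO)_{\bar 0}$, and the resulting block decomposition of $\OO_0$ into invariant $2$-planes is sound and does produce the stated action (e.g.\ $D_{\jj,\kk}(\jj)=4\kk$, $D_{\ll,\ii\ll}(\jj)=-2\kk$, so $h(\jj)=\tfrac16(16-10)\kk=\kk$). One caution on signs: carry out the evaluation from the operator definition $D_{x,y}=[L_x,L_y]+[L_x,R_y]+[R_x,R_y]$ exactly as you propose --- which indeed gives $D_{\jj,\kk}\vert_{\HH_0}=[2\ii,\,\cdot\,]$ --- and do not substitute the paper's displayed identity $D_{x,y}(z)=[z,[x,y]]-3(x,z,y)$, whose bracket term as printed has the opposite sign and would lead you to $-h$ instead of $h$.
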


Be careful with the confusing notation, since $\bf{i}$ denotes at the same time the element in $\OO_0$ and the scalar in the field $\CC$ that we are using for complexifying.

\begin{proposition}\label{prop_TDSppal}
Take $\hh_8:= \mathrm{span}\,\langle h,x,y\rangle\subset\mathfrak{g}_2$ for $h$ defined as in Eq.~\eqref{hppal} and 
$$
x:=D_{\bf{i},\bf{k}}+\frac{\sqrt{15}}{9}\big(D_{\bf{j},\bf{l}}+D_{\bf{k},\bf{il}}\big),\quad
y:=-D_{\bf{i},\bf{j}}+\frac{\sqrt{15}}{9}\big(-D_{\bf{k},\bf{l}}+D_{\bf{j},\bf{il}}\big).
$$
Then
\begin{equation}\label{eqcuenta}
[h,x]=y,\quad 
[h,y]=-x,\quad 
[x,y]=\frac83h,\quad 
\end{equation}
and $\hh_8 $ is a principal three-dimensional simple subalgebra of $\mathfrak{g}_2=\der(\OO)$.
\end{proposition}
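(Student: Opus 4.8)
The plan is to treat separately the three bracket identities in Eq.~\eqref{eqcuenta}, the simplicity of $\hh_8$, and its principality, observing at the outset that the two relations involving $h$ are by far the cheapest.

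First I would establish $[h,x]=y$ and $[h,y]=-x$. Because $h\in\der(\OO)$ while $x$ and $y$ are $\RR$-linear combinations of operators $D_{a,b}$, the invariance identity Eq.~\eqref{eq_corcheteconds}, namely $[h,D_{a,b}]=D_{h(a),b}+D_{a,h(b)}$, applies termwise. The action of $h$ on the basis $\{\mathbf{i},\mathbf{j},\mathbf{k},\mathbf{l},\mathbf{il},\mathbf{jl},\mathbf{kl}\}$ is recorded in Lemma~\ref{le_elh}, so each $[h,D_{a,b}]$ is at once another combination of $D$'s; collecting the resulting terms (the common factor $\frac{\sqrt{15}}{9}$ survives unchanged) one reads off $y$ and $-x$ respectively. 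This step is purely mechanical and involves no octonion multiplication.

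The identity $[x,y]=\frac83 h$ is the genuine obstacle, since now both factors are combinations of $D_{a,b}$'s and Eq.~\eqref{eq_corcheteconds} no longer applies. The direct route is to use that $[x,y]\in\der(\OO)$ and to evaluate it on generators of $\OO_0$ through $[x,y](z)=x(y(z))-y(x(z))$, expanding every $D_{a,b}(z)=[z,[a,b]]-3(a,z,b)$ in the multiplication table of $\OO$; a derivation is determined by its values on generators, so comparison with the action of $h$ in Lemma~\ref{le_elh} finishes the check. This is the lengthiest computation, and it is precisely here that the constants $\frac{\sqrt{15}}{9}$ and $\frac83$ are forced. A structural shortcut cuts the work roughly in half: Jacobi together with the two relations already proved gives $[h,[x,y]]=[[h,x],y]+[x,[h,y]]=[y,y]+[x,-x]=0$, so $[x,y]\in\mathfrak{z}(h)$; since Lemma~\ref{le_elh} exhibits a simple spectrum for $h$ on $\OO_0^\CC$, the element $h$ is regular and $\mathfrak{z}(h)$ is a Cartan subalgebra, hence two-dimensional, so it then suffices to evaluate $[x,y]$ on two well-chosen basis vectors to identify it with $\frac83 h$.

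Granting Eq.~\eqref{eqcuenta}, the span $\hh_8=\langle h,x,y\rangle$ is closed under bracket, its three generators are manifestly independent, and after rescaling the relations become the standard structure constants of $\mathfrak{su}_2$; thus $\hh_8$ is a three-dimensional simple subalgebra of $\mathfrak{g}_2$. For principality I would complexify and exhibit the $\mathfrak{sl}_2$-triple $\hat h=-2\mathbf{i}\,h$, $e=x-\mathbf{i}y$, $f=-\frac38(x+\mathbf{i}y)$, checking $[\hat h,e]=2e$, $[\hat h,f]=-2f$ and $[e,f]=\hat h$ straight from Eq.~\eqref{eqcuenta}. By Lemma~\ref{le_elh} the characteristic $\hat h$ acts on $\OO_0^\CC$ with eigenvalues $\{0,\pm2,\pm4,\pm6\}$, each of multiplicity one; the highest weight $6$ therefore generates a copy of the irreducible module $V(6)$ filling all of $\OO_0^\CC$, so $\OO_0$ is an absolutely irreducible $\hh_8$-module. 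This places $\hh_8$ in case a) of Theorem~\ref{th_clasif}, which the Example identifies with the principal orbit (the $(2,2)$ case, where $\mathfrak{g}_2^\CC\cong V(2)\oplus V(10)$ and $\dim\mathfrak{z}(e)=\mathrm{rank}\,\mathfrak{g}_2$); equivalently, $\OO_0^\CC\cong V(6)$ forces the decomposition $\mathfrak{g}_2^\CC\cong V(10)\oplus V(2)$ inside $\Lambda^2\OO_0^\CC$, so $\dim\mathfrak{z}(e)=2$ and $e$ is principal nilpotent. The single hard point of the whole argument is the identity $[x,y]=\frac83 h$.
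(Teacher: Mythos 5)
Your proposal is correct and follows essentially the same route as the paper's (very terse) proof: verify the bracket relations of Eq.~\eqref{eqcuenta} directly, then invoke Lemma~\ref{le_elh} to see that $\OO_0^\CC\cong V(6)$ is irreducible over $\hh_8^\CC$, placing $\hh_8$ in case a) of Theorem~\ref{th_clasif} and hence identifying it as principal via the decomposition $\mathfrak{g}_2^\CC\cong V(2)\oplus V(10)$ worked out in the preceding Example. Your additional refinements --- computing $[h,x]$ and $[h,y]$ termwise from Eq.~\eqref{eq_corcheteconds}, the $\mathfrak{z}(h)$ shortcut for $[x,y]$, and the explicit $\mathfrak{sl}_2$-triple --- are all sound fillings-in of what the paper dismisses as ``easily checked''.
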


\begin{proof}
The fact that $\hh_8$ is a TDS is a direct consequence of Eq.~\eqref{eqcuenta}, which can be easily checked. The algebra $\hh_8$
is in the situation of item a) in Theorem~\ref{th_clasif}  by taking into account Lemma~\ref{le_elh}, which says that the action of $h $ ($\equiv h\otimes 1$) on $\OO_0^\CC$ is irreducible
($\OO_0^\CC\cong V(6)$ as an $\hh_8^\CC$-module).
\end{proof}

\begin{remark}
This result is directly inspired in a more general and striking not published result   \cite[Teorema~21]{tesis}, which asserts that, for  an arbitrary field of zero characteristic $\mathbb F$, and a Cayley $\mathbb F$-algebra $\mathcal C$, then there is  a three-dimensional simple subalgebra $\mathfrak s$ of $\der(\mathcal C)$ 
such that  $\der(\mathcal C)$ can be decomposed as a sum of 
$\mathfrak s$ with an irreducible $\mathfrak s$-module if and only if there is $c\in \mathcal C_0$ such that $n(c)=15$. 
\end{remark}

\section{Homogeneous spaces of $G_2$ }

Assume $G\times M\to M$ is an action of a  Lie group $G$ on a manifold $M$. The action is said to be transitive if for any points $x,y\in M$, there is $\sigma\in G$ such that $\sigma\cdot x=y$. That is, the action has only one orbit.  In this case, we call $M$ a $G$-\emph{homogeneous} manifold.
For any fixed point $o\in M$ (the origin), the \emph{isotropy subgroup}
$H=\{\sigma\in G:\sigma\cdot o=o\}$  is a closed subgroup of $G$, and  $M$ can be identified with the set of left cosets $G/H$. The natural projection $G\to G/H$ becomes a principal fiber bundle with structure group $H$.
The homogeneous manifold $M\cong G/H$ is reductive if there is an $\mathrm{Ad}(H)$-invariant subspace $\mathfrak{m}$ of $\mathfrak{g}=T_eG$ that is a complement of the Lie subalgebra $\mathfrak{h}$. This condition always implies that $[\mathfrak{h}, \mathfrak{m}]\subset \mathfrak{m}$, and the converse holds whenever $H$ is connected.   The natural projection  $\pi\colon G\to G/H$ is a submersion. Therefore, for a reductive homogeneous manifold $M\cong G/H$ with fixed complement  an $\hh$-module $\mathfrak{m}$, the differential map of $\pi$ at $e\in G$ induces an isomorphism between
$\mathfrak{m}$ and $T_{o}M$.

A Riemannian manifold $(M,g)$ is said to be  homogeneous if the Lie group of all isometries $\mathrm{Isom}(M,g)$ acts transitively. If $G$ is a subgroup
of $\mathrm{Isom}(M,g)$ which also acts transitively,  then the Riemannian manifold $(M,g)$ is said to be $G$-\emph{homogeneous}. In this case, for any fixed point $o\in M$, the  isotropy subgroup is  compact. The \emph{linear isotropy representation} $H\to\mathrm{GL}(T_oM)$ is given by $f\mapsto (f_*)_o$,
where $(f_*)_o$ denotes the differential map of $f$ at the point $o\in M$.

A  connected  Riemann manifold $(M,g)$ is a \emph{symmetric space} if, for any $x\in M$, there is a $g$-isometry $\xi^{x}\colon M\to M$  such that
$\xi^{x}(x)=x$ and $(\xi^{x}_{*})_{x}=-\mathrm{id}_{T_xM}.$ Every symmetric space is a $G$-homogeneous manifold $M\cong G/H$ and the symmetry $\xi^{o}$ gives further structures. There is an involutive automorphism $F\colon G \to G$  
such that $\mathfrak{m}:=\{X\in \mathfrak{g}: (F_{*})_e(X)=-X\}$ is an $\mathrm{Ad}(H)$-invariant subspace of $\mathfrak{g}$ that is a complement of the Lie subalgebra $\mathfrak{h}$. Even more,
the decomposition $\mathfrak{g}=\mathfrak{h}\oplus \mathfrak{m}$ is a $\mathbb{Z}_{2}$-grading with odd part $\mathfrak{m}$, and $\mathfrak{m}$ is endowed with a Lie triple system structure, given by $[x,y,z]=[[x,y],z]$.

In order to be used later, let us denote   by $V_{n,k}$
the Stiefel manifold of all orthonormal $k$-frames in $\mathbb{R}^{n}$ and recall that $\mathrm{dim}\, V_{n,k}=nk-\frac{k(k+1)}{2}$.
The set of all oriented $p$-dimensional subspaces of  $\mathbb{R}^{n}$ is denoted by $\widetilde{\textrm{Gr}}_{n,p}$, and it is known as the Grassmann manifold of the oriented $p$-planes in $\mathbb{R}^{n}$. Its dimension is  $\mathrm{dim}\, \widetilde{\textrm{Gr}}_{n,p}=p(n-p).$
\smallskip

According to \cite[Theorem~3.19]{Warner},  for every Lie group $G$ with Lie algebra $\mathfrak{g}$ and every Lie subalgebra  $\hh$  of $\mathfrak{g}$, there is a unique connected Lie subgroup $H$ of $G$ with corresponding Lie algebra $\hh$. 
Thus, for any $i=1,\dots,8$, let us denote by $H_{i}$ the unique connected Lie subgroup of $G_{2}$ corresponding to every Lie  subalgebra $\hh_i$ in Theorem~\ref{th_clasif}.  
We are in position to give explicit models of each  one of  the reductive homogeneous spaces $G_2/H_i$, for $i=1,\dots,8$. 
We won't be concerned about the homogeneous quotients appearing for not connected subgroups, because they are locally undistinguishable of those ones in our list.


\subsection{$G_2$ as a hypersurface of $V_{7,3}$ } \label{elM0}

Any automorphism $f$ of the octonion algebra is determined by the triple of octonions 
$(f({\bf i}),f({\bf j}),f({\bf l}))$, since the algebra generated by $\{\bf i,\bf j,\bf l\}$ is the whole $\OO$. 
In this way, see  \cite[Remark~5.13]{todoG2}  or \cite[4.1]{Baez},
the group $G_2$ can be identified with the set of \emph{Cayley triples}, that is, triples $(X_0,X_1,X_2)$ of orthonormal vectors in $\RR^7$ such that $X_2$ does not belong to the subalgebra generated by the other two elements, in other words, $\Omega(X_0,X_1,X_2)=0$.
For any Cayley triple $(X_0,X_1,X_2)$, there is a unique automorphism $f\in G_2$ such that $(f({\bf i}),f({\bf j}),f({\bf l}))=(X_0,X_1,X_2)$.
The reason is that   $\{X_0,X_1,X_2,X_0\times X_1,X_0\times X_2,X_1\times X_2,X_0\times (X_1\times X_2)\}$  is an  orthonormal basis of $\RR^7$ and it is easy to reconstruct the image by $f$ of all these basic elements  from $f(X_i)$ for any $i=0,1,2$.
Thus, $G_2$ can be viewed as the following hypersurface inside the Stiefel manifold $V_{7,3}$,
\begin{equation}\label{eq_G}
G_2\cong M_0:=\{(X_0,X_1,X_2)\in V_{7,3}:\Omega(X_0,X_1,X_2)=0\}.
\end{equation} 
For references to this description, see the problem 9c) proposed in \cite[p.~121]{Harvey}. Thus, $M_0$ is the \emph{principal homogeneous space} (homogeneous space   for $G_2$ in which the stabilizer subgroup of every point is trivial) or \emph{torsor} of the group $G_2$.\smallskip

In the remainder of this paper, we provide an explicit geometric description  
of  each one of the  reductive homogeneous spaces $G_{2}/H_{i}$ for $1\leq i\leq 8$.


\subsection{The symmetric space $G_{2}/H_{1}$ } \label{elM1}

We will consider 
$$
M_1:=\{\mathcal Q\le\OO:\dim\mathcal Q=4, \ \mathcal Q^2\subset\mathcal Q\},
$$
 the set of subalgebras of $\OO$ of dimension 4.
The Lie group $G_2$ acts on $M_1$ by $f\cdot \mathcal Q=f(\mathcal Q)$. 
From Remark~\ref{re_subalg}, every subalgebra $\mathcal Q\in M_{1}$ is in the orbit of $\HH$, therefore this action is transitive.
 For the isotropy subgroup of $\HH\in M_1$, we have the isomorphism 
 $$
 \{f\in \mathrm{Aut}(\OO):f(\HH)\subset\HH\}=H_1\to\SO(\HH^\perp,n),\quad f\mapsto f\vert_{\HH^\perp}.
 $$
Hence, we conclude that $M_1\cong G_2/\SO(4)$ and the natural submersion $G_2 \to G_2/\SO(4)$ is given by 
 $$
 G_2\longrightarrow M_1,\qquad f\mapsto f(\HH).
 $$ 
 Alternatively, by means of  the description in Eq.~\eqref{eq_G}, we get
 $$
 M_0\longrightarrow M_1,\qquad (X_0,X_1,X_2)\mapsto\span{1,X_0,X_1,X_0\times X_1}.
 $$ 
 
 A different description of the manifold $M_1$  in terms of  the   3-form $\Omega$ can be provided. 
 \begin{lemma}
 A 4-dimensional vector subspace $\mathcal Q\le\OO  $
 is a subalgebra if and only if  $\mathcal Q^\perp$   is a 4-dimensional subspace of $\OO_0$ where $\Omega$ vanishes.
 \end{lemma}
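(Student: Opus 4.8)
The plan is to reduce the statement to a purely algebraic equivalence about the cross product on $\OO_0$, read off from the orthogonal splitting induced by $\mathcal Q$. Write $W=\mathcal Q^\perp$; the condition that $W$ be a $4$-dimensional subspace of $\OO_0$ is equivalent to $1\in\mathcal Q$, and both hypotheses of the lemma force this. Indeed, a $4$-dimensional subalgebra is isomorphic to $\HH$ by Remark~\ref{re_subalg}, hence unital, and its unit is a nonzero idempotent of the division algebra $\OO$, so it equals $1$; on the other side $W\subseteq\OO_0$ says exactly $1\perp W$, i.e. $1\in\mathcal Q$. Granting $1\in\mathcal Q$, set $V=\mathcal Q\cap\OO_0$, a $3$-dimensional subspace with $\OO_0=V\oplus W$ orthogonally. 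The identity $uv=-n(u,v)\,1+u\times v$ for $u,v\in\OO_0$ shows that $\mathcal Q=\span{1}\oplus V$ is a subalgebra if and only if $V\times V\subseteq V$. On the other side, unwinding $\Omega(w_1,w_2,w_3)=n(w_1,w_2\times w_3)$ shows that $\Omega$ vanishes on $W$ if and only if $W\times W\subseteq W^\perp\cap\OO_0=V$. Thus the whole lemma becomes the equivalence $V\times V\subseteq V\iff W\times W\subseteq V$.

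The engine for this equivalence is the observation that, for a unit vector $a\in\OO_0$, the map $\phi_a(x)=a\times x$ restricts to an \emph{orthogonal complex structure} on the $6$-dimensional orthogonal complement of $a$ in $\OO_0$. Skew-symmetry of $\phi_a$ is immediate from the total antisymmetry of $\Omega$, and $\phi_a^2=-\id$ there follows from the double cross-product identity $x\times(x\times y)=n(x,y)\,x-n(x,x)\,y$, which I would derive in one line from alternativity using $x^2=-n(x)1$ and $x(x\times y)=x(xy)+n(x,y)x=x^2y+n(x,y)x$ for $x\in\OO_0$. The structural input is then the standard fact that a skew (equivalently, metric complex) structure preserves the orthogonal complement of any invariant subspace.

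With this in hand the equivalence runs through the intermediate condition $V\times W\subseteq W$ and is genuinely symmetric. For $V\times V\subseteq V\Rightarrow V\times W\subseteq W$, pick a unit $a\in V$: then $V\cap a^\perp$ is a $2$-dimensional $\phi_a$-invariant subspace, so its orthogonal complement $W$ is $\phi_a$-invariant, giving $a\times W\subseteq W$, and linearity in $a$ finishes it; the reverse $V\times W\subseteq W\Rightarrow V\times V\subseteq V$ is the same argument with the roles of $V\cap a^\perp$ and $W$ swapped. The passages involving $W$ use a unit $a\in W$: here $\phi_a$ maps each of the two $3$-dimensional pieces $V$ and $W\cap a^\perp$ of $a^\perp$ into the other, and injectivity together with $\phi_a^2=-\id$ upgrades the relevant containment to an equality, letting one pass between $V\times W\subseteq W$ and $W\times W\subseteq V$. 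Chaining these yields $V\times V\subseteq V\iff V\times W\subseteq W\iff W\times W\subseteq V$.

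I expect the main obstacle to be the converse direction, i.e. extracting $V\times V\subseteq V$ from the mere vanishing $\Omega|_W=0$. The difficulty is that $\Omega|_W=0$ constrains only the ``all-in-$W$'' component of $\Omega$, whereas $V\times V\subseteq V$ concerns the ``all-in-$V$'' component, and these are a priori independent pieces of the $3$-form. The complex-structure trick is precisely what couples them: it turns the cross product into an invertible isometry of $a^\perp\cap\OO_0$, so that a containment of subspaces on one side is forced, by dimension count and invariance of orthogonal complements, into an equality that feeds back to the other side. An alternative, less self-contained route for the forward implication would invoke transitivity of $G_2$ on $4$-dimensional subalgebras (so $\mathcal Q=f(\HH)$ and $W=f(\HH\ll)$), together with the $G_2$-invariance of $\Omega$ and the direct Cayley--Dickson check that $\Omega$ vanishes on $\HH\ll$; but the argument above treats both directions uniformly and avoids appealing to transitivity.
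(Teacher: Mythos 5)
Your proof is correct, but it takes a genuinely different route from the paper's. The paper gets the forward implication almost for free from the $\ZZ_2$-grading $\OO=\mathcal Q\oplus\mathcal Q^\perp$ supplied by Remark~\ref{re_subalg} (so $\mathcal Q^\perp\mathcal Q^\perp\subseteq\mathcal Q$ is orthogonal to $\mathcal Q^\perp$), and proves the converse by picking an orthonormal basis $\{X_0,X_1,X_2,X_3\}$ of $W$: the vanishing of $\Omega$ on the triples $(X_0,X_1,X_i)$ forces $X_3$ to be proportional to $X_0\times(X_1\times X_2)$ and identifies $W^\perp\cap\OO_0$ explicitly as $\span{X_0\times X_1,\,X_0\times X_2,\,X_1\times X_2}$, whose closure under $\times$ rests on $(X_0,X_1,X_2)$ being a Cayley triple. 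You instead recast both directions as the single symmetric equivalence $V\times V\subseteq V\iff W\times W\subseteq V$ (with $V=\mathcal Q\cap\OO_0$) and prove it coordinate-freely by exploiting the orthogonal complex structure $\phi_a=a\times(\cdot)$ on $a^\perp\cap\OO_0$, chaining through the intermediate condition $V\times W\subseteq W$; the dimension counts and the preservation of orthogonal complements by a skew invertible map do all the work. Your version costs a little more setup (verifying skewness of $\phi_a$ and $\phi_a^2=-\id$, which amount to the identities in Eq.~\eqref{eq_util}), but it buys a basis-free argument that treats the two implications uniformly and avoids the implicit appeal to the automorphism attached to a Cayley triple hiding behind the paper's closing remark that the span of the three cross products is \lq\lq of course\rq\rq\ closed under $\times$. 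Your preliminary observation that both hypotheses force $1\in\mathcal Q$, so that $\mathcal Q^\perp$ really does land in $\OO_0$, is a point the paper passes over silently and is worth keeping.
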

 
 \begin{proof}
 If $\mathcal Q \in M_{1}$, necessarily $\mathcal Q $ is a subalgebra isomorphic to $\HH$ as in Remark~\ref{re_subalg}.
 Then $\OO=\mathcal Q\oplus \mathcal Q^\perp$ is a $\mathbb Z_2$-grading, $\mathcal Q^\perp\mathcal Q^\perp\subset\mathcal Q$ is orthogonal to $\mathcal Q^\perp$ and the  
  3-form $\Omega$ vanishes on $\mathcal Q^\perp$ (the same happens to $\HH$). 
  
  Conversely, take $W$ a 4-dimensional subspace of $\OO_0$ such that $\Omega(W,W,W)=0$, and let us check that $\mathcal Q=\RR\oplus W^\perp$ is a subalgebra ($\perp$ denotes here the orthogonal subspace in $\OO_0$). 
  Take  $\{X_0,X_1,X_2,X_3\}$ an orthonormal basis of $W$. As $\Omega(X_0,X_1,X_2)=0$, we have that 
  $$
   \{X_0,\, X_1,\, X_2,\, X_0\times X_1,\, X_0\times X_2,\, X_1\times X_2,\, X_0\times (X_1\times X_2)\}  
  $$
  is an  orthonormal basis of $\RR^7$. But $\Omega(X_0,X_1,X_3)=0$ too, so $X_3$ is orthogonal to $X_0\times X_1$ and analogously 
  $X_3$ is orthogonal to $X_0\times X_2$ and to $X_1\times X_2$. This means that $X_3$ should be proportional to the seventh element in the basis, $X_0\times (X_1\times X_2)$, and then $W^\perp=\langle X_0\times X_1,\, X_0\times X_2,\, X_1\times X_2 \rangle$, which is of course closed for the cross product.
 \end{proof}\smallskip
Thus we can consider 
\begin{equation}\label{eq_Msim}
M'_1:=\{W\le\RR^7:\dim W=4,\ \Omega(W,W,W)=0\},
\end{equation}
and the bijective correspondence $M_1\to M'_1$  given by $\mathcal Q\mapsto \mathcal Q^\perp$ is compatible with the $G_2$-action and 
provides an alternative description of the symmetric space $G_{2}/\SO(4)$ which does not make use of the octonionic product.
The submersion in these terms is
$$
 G_2\longrightarrow M_1',\qquad f\mapsto f(\HH\ll).
 $$

\begin{remark}
One of the advantages of this approach is that it can be generalized to other 3-forms (there are two orbits of generic 3-forms in $\RR^7$), providing a family of non-compact manifolds, quotients of the Lie group $\mathrm{Aut}(\OO_s)$, where $\OO_{s}$ denotes the split octonion algebra.
\end{remark}
\begin{remark}
Alternative descriptions appear in the literature. The most usual is considering the 3-dimensional \emph{associative} subspaces, where a 3-dimensional subspace $V$ of $\OO_0$ is said associative if the associator vanishes: $(V,V,V)=0$. This means that $\RR\oplus V$ is necessarily a subalgebra of $\OO$ (isomorphic to $\HH$), hence belonging to $M_1$.
\end{remark}

\begin{remark}
This is the best known quotient of $G_2$ since it is a symmetric space. In fact, let us recall that $G_{2}/H_{1}\cong G_{2}/\SO(4)$ comes from considering the $\ZZ_2$-grading on $\der(\OO)$ induced by the   $\ZZ_2$-grading on $\OO=\HH\oplus \HH\ll$, as in Eq.~(\ref{230922A}). This means that a model for the tangent space is given by the set of odd derivations, $T_\HH M_1\cong\der(\OO)_{\bar1}=D_{\HH_0,\HH\ll}$,  which is a Lie triple system.
An alternative nice description of  this tangent space is found  in \cite{Nakata} as
$$
\{f\colon\HH_0\to\HH\textrm{ linear}:f(\ii)\ii+f(\jj)\jj+f(\kk)\kk=0\}.
$$ 
An explicit  isomorphism of the above vector space with $ \der(\OO)_{\bar1} $ is provided by $d\mapsto f_d$, where $f_d\colon\HH_0\to\HH$ is determined by $d(q)=f_d(q)\ll$ for all $q\in\HH_0$.
As an application, the approach of odd derivations is advantageous because each subtriple provides a totally geodesic submanifold of the symmetric space $G_{2}/\SO(4)$  (see \cite{CartanSimet} and for instance \cite[Chap. 11]{surveyChen}), as in Eq.~\eqref{eq_Ntg} in Section~\ref{elM7}.

The $8$-dimensional symmetric space $G_{2}/\SO(4)$ is a quaternion-K\"ahler symmetric Riemannian manifold,  \cite[Chap. 14]{Besse}. 
This manifold appeared in the classification of  quaternion-K\"ahler symmetric space with non-zero Ricci curvature by Wolf, \cite{Wolf65}. 
After this paper,  quaternion-K\"ahler symmetric spaces are called Wolf spaces. 
\end{remark}


\subsection{The 6-dimensional sphere $G_{2}/\SU(3)$} \label{elM6}

The description of the 6-dimensional sphere $\mathbb{S}^{6}\equiv M_6:=\{X\in\RR^7:n(X)=1\}$ as a quotient of $G_2$ is very well-known too and it is possible to find it in detail in many references (for instance, \cite{todoG2, Harvey,AlbMyung}).  
For the sake of completeness, we briefly recall some details here.
Again identifying $\RR^7$ with $\OO_0$, the action of $G_2=\mathrm{Aut}(\OO)$ restricts to $\mathbb{S}^{6}$ since every automorphism of $\OO$ preserves the norm. The action is transitive. In fact,  any element in $\OO_0$ of norm $1$ can be completed to  a Cayley triple, even more:  any pair of orthonormal vectors in $\RR^7$ can be completed to  a Cayley triple. Now, the map which sends $(\ii,\jj,\ll)$ to a fixed Cayley triple is an automorphism of $\OO$. In particular we find an element of $G_2$ which sends $\ii$ to any norm $1$ element in $\OO_0$, 
which gives  the transitively of the action of $G_2$ on $\mathbb{S}^{6}.$
The isotropy subgroup of the element $\ii\in\mathbb S^6$ is $$H_6=\{f\in\mathrm{Aut}(\OO):f(\ii)=\ii\}\cong\SU(\CC^\perp,\sigma),$$ where $\sigma$ is the Hermitian form 
  in Eq.~\eqref{eq_sigma}, and the precise isomorphism is $f\mapsto f\vert_{\CC^\perp}$. Thus, we   get $\mathbb S^6\cong G_2/\SU(3)$ and the natural submersion reads as
$$
 G_2\longrightarrow  G_{2}/\SU(3)\cong M_6,\qquad f\mapsto f(\ii).
 $$ 
  Alternatively,   in the terms of  Eq.~\eqref{eq_G}, we get
 $$
 M_{0}\longrightarrow  M_6,\qquad (X_0,X_1,X_2)\mapsto X_0.
 $$   
 
 \begin{remark}
 This description of $\mathbb{S}^{6}$  as $G_{2}$-homogeneous manifold is closely related with the nearly K\"ahler structure $J$ induced on $\mathbb{S}^{6}$ from the  cross product $\times$. In fact, the group of automorphisms of this nearly K\"ahler structure on $\mathbb{S}^{6}$ is just $G_{2}$ (see \cite{ABT} for a clear description of these facts with very interesting historical comments).
 
 \end{remark}

 \begin{remark}
 Note that $G_2$ acts also in the projective space $\RR P^6=\RR^7\setminus\{0\}/\sim$, where for any $x,y\in\RR^7\setminus\{0\}$, we say that $x\sim y$ if there is $\lambda\in\RR$ with $x=\lambda y$. The class of $x\in\RR^7\setminus\{0\}$ is denoted by $[x]$. Since the action of $G_2$ on $\RR^7$ is linear,  it induces an action on $\RR P^6$   which is obviously transitive. The only difference is that the isotropy subgroup of $[\ii]\in \RR P^6$ is $\{f\in\mathrm{Aut}(\OO):f(\ii)=\pm\ii\}$, which is not connected, but a double covering of $\SU(3)$  with the same related Lie algebra, $\{d\in\der(\OO):d(\ii)=0\}=\hh_6$. 
 \end{remark}


\subsection{The unit fiber bundle over the six dimensional sphere $G_{2}/\SU(2)^{l}$} \label{elM3}

Since $G_{2}$ is a subgroup of $\SO(\OO_{0}, n)$, we have a natural action  on any Stiefel manifold $V_{7,k}\equiv\{(X_1,\dots,X_k):X_i\in\OO_0,\, n(X_i,X_j)=\delta_{ij}\}$ for any $k\le7$. This action  is    transitive for $k=1,2$, since any   orthonormal $k$-frame  $(X_1,\dots,X_k)$   can be completed to  a Cayley triple. But it  is not transitive for $k=3$, since $G_2$ preserves $\Omega$ and not all the orthonormal $3$-frames behave similarly for $\Omega$. Of course $V_{7,1}\cong\mathbb S^6$ and we study now the Stiefel manifold $V_{7,2}$ as homogeneous quotient of $G_2$. 
 
 The isotropy subgroup of $(\ii,\jj)\in V_{7,2}$ is 
 $$
 H_3=\{f\in\mathrm{Aut}(\OO): f(\ii)=\ii,\, f(\jj)=\jj\}= \{f\in\mathrm{Aut}(\OO):f\vert_{\HH}=\id\}.
 $$
  In a similar way to what happened with its related subalgebra $\hh_3$, we have the isomorphism 
 $$H_3\cong\SU(\HH^\perp,\sigma)=\SU(2),\  f\mapsto f\vert_{\HH^\perp}.$$
 Therefore, we get $V_{7,2}\cong G_2/\SU(2)$.

 \begin{remark}
 We would like to point out that there are several subgroups isomorphic to $\SU(2)$ into $G_{2}$. The related homogeneous manifolds are completely different. 
 Our copy of $\SU(2)$ into $G_{2}$ is achieved by means of Theorem \ref{th_clasif} and \cite[Theorem~3.19]{Warner}.
 A mention to this quotient appears in \cite[p.~121]{Harvey}. 
\end{remark}

The Stiefel manifold $V_{7,2}$ has another geometric interpretation. Namely, for any Riemannian manifold $(M,g)$, the unit fiber bundle over $M$ is described as 
$$\mathcal UM=\{u\in T_pM:g_p(u,u)=1, \ p\in M\}.$$ 
For any $n$, the Stiefel manifold is diffeomorphic to the unit fiber bundle over the sphere
$$
V_{n,2}=\{(X_1,X_2):X_i\in\RR^n,\,\langle X_i,X_j\rangle=\delta_{ij}\}\cong \mathcal U\mathbb S^{n-1},
$$
since $ (X_1,X_2)\mapsto X_2\in T_{X_1}\mathbb S^{n-1}=\span{X_1}^\perp$, which is a unit vector tangent to $X_1\in\mathbb S^{n-1}$.
In particular, the above description as homogeneous manifold of $V_{7,2}$ gives that 
$\mathcal U\mathbb S^6 \cong G_2/\SU(2)$.

  Again,   we can describe explicitly the natural submersions as follows
   \begin{center}  \quad\begin{tikzcd} 
&  M_0\arrow[d,"\pi_{03}"]&\\
&\ar{ld}[swap]{\pi_{36}} V_{7,2}\arrow[rd,"\pi_{31}"]  &  \\  
\mathbb S^6&  &M_1
\end{tikzcd} 
\qquad \small{\begin{tikzcd} 
[arrows={|->}] 
&(X_0,X_1,X_2) \arrow[d,dashed]&\\
&\ar[ld,dashed] (X_0,X_1 )\arrow[rd,dashed]  &  \\  
X_0&  &\span{1,X_0,X_1,X_0\times X_1}
\end{tikzcd}}
\end{center}
or, alternatively,
$
G_2 \to   V_{7,2}\to \mathbb S^6$,  $f\mapsto (f(\ii),f(\jj))\mapsto f(\ii),
$
and in this way 
$
V_{7,2}\to M_1$, $ (f(\ii),f(\jj))\mapsto f(\HH).$

In a general setting, for every Lie group $G$ with closed subgroups $K\subset H\subset G$, the natural projection
$$
G/K \longrightarrow G/H, \quad gK \mapsto gH
$$
is a fiber bundle with standard fiber the homogeneous manifold $H/K.$
In particular, the realizations of $\mathcal{U}\mathbb{S}^{6}$ and of the symmetric space $M_{1}$ as $G_2$-homogeneous manifolds  show that the above projection $
V_{7,2}\to M_1$ is a fiber bundle with standard fiber $\SO(4)/\SU(2)$.


\subsection{ The complex projective quadric $G_{2}/\U(2)^{l}$} \label{elM2}

Consider now the set
$$
M_2:=\{(w,W):W\in M_1,\ w\in W,\ n(w)=1\}.
$$
 Again $f\in G_2$ acts on $M_2$ by means of $f\cdot(w,W)=(f(w),f(W))$. 
 This action is transitive. Namely, for any $(w,W)\in M_2$, take $w'\in W\cap \span{w}^\perp$ such that $n(w')=1$. The element $(w,w')\in V_{7,2}$ can be completed to  a Cayley triple $(w,w',w'')$ with $\Omega(w,w',w'')=0$. Note that $w''\notin W=\span{1,w,w',w\times w'}$ and
 it is clear that the automorphism $f\in\mathrm{Aut}(\OO)$ which sends $(\ii,\jj,\ll)$ to $(w,w',w'')$ satisfies $f(\HH)=W$.
 The isotropy subgroup of the element $(\ii,\HH)\in M_2$ is 
 $$
 H_2= \{f\in\mathrm{Aut}(\OO):f({\HH})\subset \HH,\  f(\ii)=\ii\},
 $$
  whose corresponding Lie algebra is evidently $\hh_2$ in  Theorem~\ref{th_clasif}.
 We also have the isomorphism 
 $$
 H_2\longrightarrow \U(\HH^\perp,\sigma)=\U(2), \quad f\mapsto f\vert_{\HH^\perp},
 $$ 
 and, then, as a consequence of this discussion, we get $M_2\cong G_2/\U(2)$. 
Now, the natural projection $\pi_{26}$ from $M_{2}\cong G_{2}/\U(2)$ to $\mathbb{S}^{6}\cong G_{2}/\SU(3)$  
 is a fiber bundle with standard fiber the homogeneous manifold $\SU(3)/\U(2)$;
 and the natural projection $\pi_{21}\colon M_{2}\to M_{1}$  is a fiber bundle with standard fiber the homogeneous manifold $\SO(4)/\U(2);$
which are given by
  \begin{center}  
  \begin{tikzcd}  
&\ar{ld}[swap]{\pi_{26}} M_2\arrow[rd,"\pi_{21}"]  &  \\   
\mathbb S^6&  &M_1
\end{tikzcd} 
\quad \begin{tikzcd} 
&\ar[ld,dashed,|->] (w,W)\arrow[rd,dashed,|->]  &  \\   w&  &W  
\end{tikzcd}
\end{center}

 \begin{remark}
 Let us note  the appearance of the manifold $M_2$ as $\widetilde{Fl}_{1,ass}(\OO_0)$ in  \cite{NakataNuestrodoblefibrado}. In this paper, F.~Nakata takes into account the   double fibration given by $(\pi_{26},\pi_{21})$ (with our notations)
%
%
  to prove in \cite[Theorem~6.3]{NakataNuestrodoblefibrado} that, for any $w\in\mathbb S_6$, $\pi_{21}({\pi_{26}}^{-1}(w))$ is a totally geodesic submanifold of $M_1$ isomorphic to $\CC P^2$, and for any $W\in M_1$,  $\pi_{26}({\pi_{21}}^{-1}(W))$ is a totally geodesic submanifold of $\mathbb S^6$ isomorphic to $\mathbb S^2$. 
We can also find this double fibration from the point of view of isoparametric hypersurfaces in \cite[Main Theorem]{Miyaoka}. 
\end{remark}

More geometrical interpretations are possible for $M_2$. Let $ \widetilde{\mathrm{Gr}}_{7,2}$ be the Grassmann manifold of the oriented planes in $\RR^7$. This manifold can be identified to $M_2$ by
$$
  \widetilde{\mathrm{Gr}}_{7,2}\longrightarrow M_2, \quad
 \Pi= \span{\{X_1,X_2\}}\mapsto (X_1\times X_2,\span{1,X_1,X_2,X_1\times X_2}),
$$
where $\{X_1,X_2\}$ is an oriented orthonormal basis of $\Pi \in \widetilde{\mathrm{Gr}}_{7,2}.$ There is a natural action of $G_{2}$ on the Grassmann manifold $\widetilde{\mathrm{Gr}}_{7,2}$ and the above identification is compatible with the $G_2$-actions.
The inverse map sends $(w,W)\in M_2$ to the plane $  W\cap \span{1,w}^\perp$ with the orientation given by 
$\span{\{X,w\times X\}}$ for any $X\in  W\cap \span{1,w}^\perp$ with $n(X)=1$. For instance $(\ii,\HH)\in M_2$ corresponds to the oriented plane $\span{\{\jj,\kk\}}\in \widetilde{\mathrm{Gr}}_{7,2}$. \smallskip

  Let us recall the well-known diffeomorphism of the Grassmann manifold $ \widetilde{\mathrm{Gr}}_{7,2}$ 
with the following quadric of the complex projective space, 
$$
Q_5=\{[z]\in\CC P^6=\frac{\CC^7\setminus\{0\}}{\sim}:z_1^2+\dots+z_7^2=0\}.
$$
The identification proceeds as follows
$$
  \widetilde{\mathrm{Gr}}_{7,2}\longrightarrow Q_5, \quad
  \span{\{X_1,X_2\}}\mapsto  [X_1+\ii X_2],
$$
and the inverse map sends every $[z]\in Q_5$ to the plane oriented by $\span{\{\mathrm{Re}(z),\mathrm{Im}(z)\}}\le\RR^7$. \smallskip
Hence, we have $Q_{5}\cong G_{2}/\U(2).$

\begin{remark}
Every complex quadric $Q_{m}$ inherits a Riemannian metric from the Fubini-Study metric on $\mathbb{C}P^{m+1}$. Thus, each of these complex hyperquadrics  $Q_m$ is a symmetric space, but viewed as $Q_m\cong\SO(m + 2)/\SO(2) \times\SO(m)$,
 which is not the decomposition considered here for $m=5$. Recall that the same situation happened for the spheres $\mathbb S^m\cong\SO(m + 1)/\SO(m)$ and $m=6$. 
 By taking advantage of the techniques of Lie triple systems, this description of   $Q_m$ as a symmetric space  has been used to obtain its  totally geodesic submanifolds,  \cite{quadrics}. For $m=5$, the diffeomorphism $Q_{5}\cong G_{2}/\U(2)$ was used by R.~Bryant in \cite{Bryant}. 
Recall that A.~Gray  proved in \cite{GrayS6} that
 every almost complex submanifold of the nearly K\"ahler manifold $\mathbb{S}^{6}$ is minimal and $\mathbb{S}^{6}$
 has no $4$-dimensional almost complex submanifolds with respect to this nearly K\"ahler  structure.
In this context, R~Bryant investigated almost complex curves in $\mathbb{S}^{6}$, that is, non-constant smooth maps $f \colon M^2 \to \mathbb{S}^{6}$ from a Riemann surface $M^2$ such that the differential map $f_{*}$ is complex linear.
Every complex curve is minimal and the ellipse of curvature
$
\{\textrm{II}(u,u): u\in \mathcal{U}_{p}M^{2}\}
$
describes a circle in $(T_{p}M^{2})^{\perp}$ for every $p\in M^{2}$, where $\textrm{II}$ is the second fundamental form.
The  almost complex curve $f \colon M^2 \to \mathbb{S}^{6}$ is called superminimal when morever $
\{(\bar{\nabla}_{u}\textrm{II})(u,u): u\in \mathcal{U}_{p}M^{2}\}
$
  describes a circle too, where $\bar{\nabla}$ denotes the Levi-Civita connection of $\mathbb{S}^{6}$. In order to study superminimal almost complex curves in $\mathbb{S}^{6}$,   Bryant construction in \cite{Bryant} just uses the natural projection 
$$
Q_{5}\cong G_{2}/\U(2) \longrightarrow \mathbb{S}^{6}\cong G_{2}/\SU(3).
$$
See  \cite[Sec. 19.1]{surveyChen} and references therein for more details.
\end{remark}


\subsection{ The unknown quotient $G_{2}/\SO(3)$} \label{elM7}

We consider now the set $$\{(w,W):W\in M_1,\ w\in W^\perp,\ n(w)=1\},$$ which is in one-to-one correspondence with
$$
M_7:=\{(w,W):W\in M'_1,\ w\in W,\ n(w)=1\}
$$
by means of the map $(w,W)\to (w, W^{\perp}).$
This manifold has a quite similar description to the above one of $M_2$, but its geometry has nothing to do, not even the dimension.
Again $f\in G_2$ acts on $M_7$ as   $f\cdot(w,W)=(f(w),f(W))$.

This action of $G_2$ on $M_7$ is transitive. Indeed,   take  $(\ll,\HH\ll)\in M_7$. For any $(w,W)\in M_7$, let $X_1,X_2$ be a pair of orthonormal vectors in $W^\perp$ (in $W^\perp\cap\span{1}^\perp$ if we think of the orthogonal in $\OO$ instead of in $\RR^7$). Thus,  $(X_1,X_2,w)$ is a Cayley triple. The automorphism $f\in\mathrm{Aut}(\OO)$ which sends $(\ii,\jj,\ll)$ to $(X_1,X_2,w)$ satisfies $f(\HH)=\RR\oplus W^\perp$,
because $\{X_1,X_2\}$ is a set of generators (as an algebra) of the subalgebra $\RR\oplus W^\perp$.
Hence $(f(\ll),f(\HH\ll))=(w,W)$. On the other hand,
recall that an automorphism $f$ leaves $\HH$ invariant if and only if it leaves $\HH^\perp=\HH\ll$ invariant, so that 
the isotropy subgroup of $(\ll,\HH\ll)\in M_7$ is 
$$
H_7= \{f\in\mathrm{Aut}(\OO):f({\HH})\subset \HH, \ f(\ll)=\ll\}, 
$$
whose related Lie algebra is evidently $\hh_7$. We have the isomorphism
$$
H_7 \longrightarrow  \SO(\HH_0\ll,n)=\SO(3), \quad f\mapsto f\vert_{\HH_0\ll},
$$  given  by the restriction map. 
Therefore, $M_7$ is a manifold diffeomorphic to $  G_2/\SO(3)$,  and the natural submersion reads as
 $$
 G_2 \longrightarrow  M_7,\quad f\mapsto (f(\ll),f(\HH\ll)).
 $$
 Alternatively, through   Eq.~\eqref{eq_G}, we get
$$
 M_0 \longrightarrow M_7, \quad
   (X_0,X_1,X_2)\mapsto (X_0,\span{X_0,X_0\times X_1,X_0\times X_2,X_0\times (X_1\times X_2)} ).
$$
The manifold $M_{7}$ is a fiber bundle over the sphere $\mathbb{S}^{6}$ and over the symmetric space $G_{2}/H_{1}$.
The projections maps  can be again  described as follows
\begin{equation} \label{280922A}
\centering
  \begin{tikzcd}  
&\ar{ld}[swap]{\pi_{76}} M_7\arrow[rd,"\pi_{71}"]  &  \\   
\mathbb S^6&  &M'_1
\end{tikzcd} 
\quad \begin{tikzcd} 
&\ar[ld,dashed,|->] (w,W)\arrow[rd,dashed,|->]  &  \\   w&  &W  
\end{tikzcd} 
\end{equation} 
%
The standard fibers of $\pi_{76}$ and $\pi_{71}$ are  $\SU(3)/\SO(3)$ and $\SO(4)/\SO(3)\cong\mathbb S^3$, respectively.

\begin{remark}
As far as we know, this space $M_7\cong G_2/\SO(3)$  appears in the literature less frequently  than the other $G_2$-homogeneous manifolds. The only appearance that we have found, occurs in \cite{Enoyoshi}  and \cite{Enoyoshiprevio} as the set $\tilde M(0)$ of level $t=0$ of  the calibration $\Omega$, 
$$\tilde M(t)=\{V\in \widetilde{\mathrm{Gr}}_{7,3}: \Omega(V,V,V)=t\}.$$
Enoyoshi first proved in \cite{Enoyoshiprevio} that all these sets $M(t)$ are diffeomorphic to $G_2/\SO(3)$ for $t\in(-1,1)$ and he also computes their principal curvatures as hypersurfaces of $ \widetilde{\mathrm{Gr}}_{7,3}$. Remarkably, the level sets $M(1)$ and $M(-1)$ are diffeomorphic to the symmetric space $G_{2}/H_{1}$. 
The double fibration (\ref{280922A}) appears in \cite{Enoyoshi}  to prove
that,  for any $w\in\mathbb S_6$, the submanifold $\pi_{71}({\pi_{76}}^{-1}(w))$ is  totally geodesic in $M'_1$ and isomorphic to $\SU(3)/\SO(3)$, and for any $W\in M'_1$, the submanifold  $\pi_{76}({\pi_{71}}^{-1}(W))$ is isomorphic to $\mathbb S^3$ and  totally geodesic  and Lagrangian in $\mathbb S^6$ \cite[Theorem~4.2]{Enoyoshi}. Let us recall that a submanifold of $\mathbb{S}^6$ is Lagrangian when the nearly K\"ahler structure of $\mathbb{S}^{6}$ applies the tangent bundle of the submanifold in its normal  bundle. 
 \end{remark}

 The existence of these 5-dimensional totally geodesic submanifolds of $M_1'$ agrees with the results of Klein \cite[Theorem~5.4]{Kleintg} on maximal totally geodesic submanifolds of  the exceptional Riemannian symmetric spaces of rank~$2$, obtained with very different arguments based on Lie triple systems. As we have concrete expressions of $\pi_{76}$ and $\pi_{71}$, we can easily compute $\pi_{71}({\pi_{76}}^{-1}(\ll))$, which coincides with
\begin{equation}\label{eq_Ntg}
N=\{W\in M'_1:\ll\in W\}.  
\end{equation}
So $N$ is a totally geodesic submanifold of $M_1'$ diffeomorphic to $\SU(3)/\SO(3)$. 
An independent algebraic proof is given by the fact that
 the $5$-dimensional vector subspace 
$$
\mathfrak{n}=\span{D_{p,p\ll}:p\in\HH_0}\le D_{\HH,\HH\ll}=\der(\OO)_{\bar 1}
$$
 is closed for the triple product $[x,y,z]=[[x,y],z]$. 
 Besides $\{f\in G_2:f(\ll)=\ll\}\cong\SU(3)$ (isomorphic to $H_6$) acts transitively on $N$ and the isotropy subgroup of $\HH\ll\in N$ is of course 
$H_7=\{f\in G_2:f(\HH)\subset\HH, f(\ll)=\ll\}\cong\SO(3)$.
Note that, in turn, the manifold $N$ is a symmetric space which has received quite attention  in \cite{Nurowski}, where five-dimensional geometries modeled on $N$
have been studied. \smallskip

An algebraic  model for the tangent space of $G_{2}/\SO(3)$ can be found in \cite[\S6]{ModelosG2}, mainly based on linear algebra. Some work in progress is making use of this model in order to find  good metrics in $M_7$.


\subsection{ The twistor space of complex structures $G_{2}/\U(2)^{r}$} \label{elM4}

As   mentioned above, the symmetric space $M_1$ is an $8$-dimensional well-known quaternion-K\"ahler
manifold (see, for instance, \cite[Chap. 14]{Besse} for general notions on quaternion-K\"ahler
manifolds). In particular, there is a subbundle $Q\subset \mathrm{End}\,TM_1$, locally generated by three anticommuting fields of
endomorphisms $J_1$, $J_2$, $J_3=J_1J_2$ such that $J_i^2=-\mathrm{id}$, which consists of 
skewsymmetric
endomorphisms
and such that the Levi-Civita connection preserves $Q$. 
The  twistor space 
$$
\mathcal Z=\{0\ne A\in Q:A^2=-\mathrm{id}\},
$$
 endowed with the natural projection on $M_1$, is a fiber bundle with standard fiber the two-dimensional sphere $\mathbb{S}^2$. It is a well-known space, for instance $\mathcal{Z}$ has a natural complex structure  \cite[Theorem~4.1]{QK} and  a K\"ahler-Einstein metric of positive scalar curvature. 
We now describe   the twistor space $\mathcal{Z}$ of the quaternion-K\"ahler symmetric manifold $M_1'$ from our concrete algebraic viewpoint.

Recall that, for $W$ a real vector space, an endomorphism $J\colon W\to W$ is said to be a \emph{complex structure} of $W$ if $J^2=-\id_W$. 
In this case, $W$ can be endowed with a complex vector space structure by taking as a scalar multiplication $\CC\times W\to W$, $(\alpha+\ii\beta)w=\alpha w+J(\beta w)$. 
If we have fixed a scalar product on $W$, we will say that
the complex structure $J$ is \emph{metric}   if $J\in\SO(W)$\footnote{Note that, if $J^2=-\mathrm{id}$, then $J\in\SO(W)$ if and only if $J\in\mathfrak{so}(W)$.}.
We consider$$
N_4=\{(W,J):W\in M_1',\  J\textrm{ metric complex structure on }W\},
$$
where the subspaces $W$ of $\RR^7$ are endowed with the scalar product inherited from the usual one on $\RR^7$.
It is evident that $G_2$ acts on $N_4$ by $f\cdot(W,J)=(f(W),fJf^{-1})$. The derivation $d_{\ii}^l$, considered in Eq.~\eqref{eq_defderivlyr}, restricts to a complex structure $d_{\ii}^l\vert_{\HH^\perp}$ of $\HH^\perp$, since $q\ll\mapsto(\ii q)\ll\mapsto -q\ll$. Besides, $d_{\ii}^l$ is metric since $n(q\ll)=n((\ii q)\ll)$.
In other words, we have $(\HH\ll,d_{\ii}^l)\in N_4$.
The subgroup of $G_2$ which fixes this element in $  N_4$ is
$$
H_4=\{f\in\mathrm{Aut}(\OO):f({\HH}^\perp)\subset \HH^\perp,\ fd_{\ii}^l\vert_{\HH^\perp}=d_{\ii}^lf\vert_{\HH^\perp}
\}.$$

\noindent Let us check that $H_4$ coincides with $\{f\in\mathrm{Aut}(\OO):f\tau=\tau f\}$, the unique connected Lie subgroup with related Lie algebra   $\hh_4$. 
(Recall that  the automorphism $\tau$ was defined in Eq.~(\ref{290922A}).)
Consider $f\in H_4$,  and take into account that $f(\HH)\subset \HH$ and $\tau\vert_\HH=\id$ to get $f\tau=\tau f$ on $\HH$. 
Also, we    have $f\tau=\tau f$ in $\HH^\perp$, since 
$d_{\ii}^l\vert_{\HH^\perp}=\tau\vert_{\HH^\perp}$ (funny fact, since $d_{\ii}^l\in\der(\OO)$ but $\tau\in \mathrm{Aut}(\OO)$). Hence $f\tau=\tau f$. 
Conversely, if $f$ is an automorphism commuting with $\tau$, then $f(\HH)\subset \mathrm{Fix}(\tau)=\HH$ and hence $f({\HH}^\perp)\subset \HH^\perp$. Again the fact $d_{\ii}^l\vert_{\HH^\perp}=\tau\vert_{\HH^\perp}$ finishes the discussion. In particular the restriction $f\mapsto f\vert_\HH$ provides the isomorphism $H_4=\{f\in\mathrm{Aut}(\OO):f\tau=\tau f\}\cong \U(\HH,\sigma).$ In order to study whether the action of $G_2$ on $N_4$ is  transitive or not, the results in the following lemma are useful.

\begin{lemma}\label{le_M4}
Fix $(W,J)\in N_4$. 
\begin{itemize}
\item[a)] For any $X\in W$ with $n(X)=1$ and any $Y\in W\cap\span{X,J(X)}^\perp$ with $n(Y)=1$, we have
\begin{equation}\label{eq_W}
W=\span{X,Y,J(X),J(Y)}, \quad W^\perp=\span{X\times Y,X\times J(X),Y\times J(X)}.
\end{equation}
\item[b)] For $X,Y$ as above, 
 there is $\alpha\in\{\pm1\}$ such that 
 $$
 (X\times Y)\times J(X)=\alpha J(Y).
 $$
If $ \alpha=1$, the automorphism of $\OO$ which sends $(\ii,\ll,\jj\ll)$ to the Cayley triple $(X\times J(X),X,Y)$   satisfies that $f\cdot(\HH\ll,d_{\ii}^l)=(W,J)$. 
\end{itemize}
\end{lemma}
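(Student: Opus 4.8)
The whole statement rests on two facts: that $J$ being a \emph{metric} complex structure forces $\{X,J(X),Y,J(Y)\}$ to be orthonormal, and that $\Omega$ vanishes on $W$ (since $W\in M_1'$), which lets me pass between $\Omega$ and the inner product through $\Omega(u,v,w)=n(u\times v,w)=n(u,v\times w)$. For part a), I would first record the orthonormality: from $J\in\mathfrak{so}(W)$ and $J^2=-\id$ one gets $n(J(X))=n(X)=1$ and $n(X,J(X))=0$, and similarly $n(J(Y))=1$, $n(J(Y),Y)=0$, together with $n(J(Y),X)=-n(Y,J(X))=0$ and $n(J(Y),J(X))=n(Y,X)=0$; hence $\{X,J(X),Y,J(Y)\}$ are four orthonormal vectors in the $4$-dimensional $W$, giving the first equality. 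For the second, each of $X\times Y$, $X\times J(X)$, $Y\times J(X)$ is orthogonal to its two factors by definition and to the remaining two basis vectors of $W$ because, for instance, $n(X\times Y,J(X))=\Omega(X,Y,J(X))=0$ (all arguments lying in $W$); so the three products lie in $W^\perp$. Polarizing the given identity $n(x\times z)=n(x)n(z)-n(x,z)^2$ yields $n(x\times z,x\times w)=n(x)n(z,w)-n(x,z)n(x,w)$, and evaluating on the orthonormal $X,J(X),Y$ shows the three products are themselves orthonormal, hence a basis of the $3$-dimensional $W^\perp$.

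For part b), set $v:=(X\times Y)\times J(X)$. Its norm is $1$ by the norm formula, since $n(X\times Y,J(X))=\Omega(X,Y,J(X))=0$. I would then check that $v$ is orthogonal to $X$, $J(X)$, $Y$ and to the three generators $X\times Y$, $X\times J(X)$, $Y\times J(X)$ of $W^\perp$: orthogonality to $J(X)$ is immediate, and the rest reduce, via $n(u\times v,w)=\Omega(u,v,w)$ and the polarized norm identity, to inner products that vanish. Being a unit vector orthogonal to six members of the orthonormal basis $\{X,J(X),Y,J(Y)\}\cup\{X\times Y,X\times J(X),Y\times J(X)\}$ of $\RR^7$, the vector $v$ must equal $\pm J(Y)$, which is exactly the claim $\alpha\in\{\pm1\}$.

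For the final assertion (case $\alpha=1$), I would first verify that $(X\times J(X),X,Y)$ is a Cayley triple: orthonormality follows from part a), and $\Omega(X\times J(X),X,Y)=n(X\times J(X),X\times Y)=0$ because those two cross products are orthogonal by part a). So the automorphism $f$ of the statement exists and is unique. Using that $f$ preserves the product, I would compute the images of the basis $\{\ll,\ii\ll,\jj\ll,\kk\ll\}$ of $\HH\ll$: directly $f(\ll)=X$ and $f(\jj\ll)=Y$, while $f(\ii\ll)=f(\ii)f(\ll)=(X\times J(X))\times X$ and $f(\kk\ll)=-f(\ii)f(\jj\ll)=-(X\times J(X))\times Y$, the octonion products collapsing to cross products because $n(X\times J(X),X)=0$ and $n(X\times J(X),Y)=0$. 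To evaluate these and fix the sign I would use that $(X,J(X),Y)$ is \emph{also} a Cayley triple: the automorphism $g$ with $g(\ii,\jj,\ll)=(X,J(X),Y)$ transports each cross-product relation among $\ii,\jj,\ll$ to one among $X,J(X),Y$, giving $(X\times J(X))\times X=g(\kk\times\ii)=g(\jj)=J(X)$ and, comparing $(X\times J(X))\times Y=g(\kk\ll)$ with $(X\times Y)\times J(X)=g(-\kk\ll)$, the identity $(X\times J(X))\times Y=-(X\times Y)\times J(X)=-\alpha J(Y)=-J(Y)$. Hence $f(\ii\ll)=J(X)$ and $f(\kk\ll)=J(Y)$, so all four images lie in $W$ and $f(\HH\ll)=W$; and checking $f\circ d_{\ii}^l=J\circ f$ on the four basis vectors (using $d_{\ii}^l(q\ll)=(\ii q)\ll$ and $J^2=-\id$) gives $fd_{\ii}^lf^{-1}=J$, i.e. $f\cdot(\HH\ll,d_{\ii}^l)=(W,J)$.

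The main obstacle is the bookkeeping in part b): identifying the nonassociative products $f(\ii)f(\ll)$ and $f(\ii)f(\jj\ll)$ with the correct cross products, and, above all, pinning the sign so that $f(\kk\ll)$ equals $J(Y)$ rather than $-J(Y)$. This is precisely where the hypothesis $\alpha=1$ and the auxiliary Cayley-triple automorphism $g$ are indispensable; the purely metric arguments determine everything only up to that sign.
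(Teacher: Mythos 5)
Your argument is correct and follows essentially the same path as the paper's proof: part a) via the orthonormality of $\{X,Y,J(X),J(Y)\}$ and the fact that $\Omega(W,W,W)=0$ forces $W\times W\subset W^\perp$, and part b) via the Cayley triple $(X\times J(X),X,Y)$ together with an explicit computation of the images of $\ll,\ii\ll,\jj\ll,\kk\ll$ and the commutation check with $J$. The only (harmless) variation is how you pin the sign of $f(\kk\ll)$: you introduce the auxiliary automorphism $g$ sending $(\ii,\jj,\ll)$ to $(X,J(X),Y)$ to convert $-(X\times J(X))\times Y$ into $(X\times Y)\times J(X)=\alpha J(Y)$, whereas the paper writes $\kk\ll=\jj(\ii\ll)$ and obtains $f(\kk\ll)=(X\times Y)\times J(X)$ directly from the multiplication rules in Eq.~\eqref{eq_util}.
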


\begin{proof}
Recall (for instance, \cite[Eqs.~(3.2), (4.13) and (4.14)]{todoG2}) that the cross product is anticommutative and satisfies
\begin{equation}\label{eq_util}
XY=X\times Y,\quad  X\times (X\times Y)=-n(X)Y,\quad X\times (Y\times Z)=-Y\times (X\times Z),
\end{equation}
whenever $X,Y,Z\in\RR^7\equiv\OO_0$ are pairwise orthogonal.
Since $J\in\SO(W,n)$ and $J^2=-\id$, then  $n(X,J(X))=n(J(X),J^2(X))=-n(X,J(X))$, so that $J(X)$ is orthogonal to $X$ for any $X\in W$.
\begin{itemize}
\item[a)]  It is clear that we can choose $Y\in W\cap\span{X,J(X)}^\perp$ with $n(Y)=1$. Let us check that $\{X,Y,J(X),J(Y)\}$ is an orthonormal frame in $W$, in particular a basis. Indeed, $n(J(Y))=n(Y)=1$ and $J(Y)$ is orthogonal to $Y$ as  above. Also, we have $n(J(Y),X)=n(J^2(Y),J(X))=-n(Y,J(X))=0$ and $n(J(Y),J(X))=n(J^2(Y),J^2(X))=n(-Y,-X)=0$. As $W\in M'_1$, we have $\Omega(W,W,W)=0$ and so $W\times W\subset W^\perp$ (orthogonal in $\RR^7\equiv\OO_0$). 
To get \eqref{eq_W}, we only need to prove that 
$\{X\times Y,X\times J(X),Y\times J(X)\}$ are linearly independent, for instance checking that they are orthogonal. This is straightforward:
$$n(X\times Y,X\times J(X))=n(XY,XJ(X))=n(X)n(Y,J(X))=0$$  and the same argument applies  to the other two cases.

\item[b)] Clearly,  $(X\times J(X),X,Y)$ is a Cayley triple:
$$
\Omega(X\times J(X),X,Y)=n(X\times J(X),X\times Y)=n(X)n(J(X),Y)=0,
$$
 so that we can take  the automorphism $f\in\mathrm{Aut}(\OO)$
 determined by $f(\ii)=X\times J(X)$, $f(\ll)=X$ and $f(\jj\ll)=Y$. Now we use Eq.~\eqref{eq_util} to compute the images under $f$:
$$
\begin{array}{ll}
\ii\ll\mapsto J(X),&
\kk\ll=\jj(\ii\ll)\mapsto  (X\times Y)\times J(X),\\
\jj=-(\jj\ll)\ll\mapsto X\times Y,\qquad\qquad &
\kk=-(\ii\ll)(\jj\ll)\mapsto Y\times J(X).
\end{array}
$$
Note that the element $(X\times Y)\times J(X)\in (W\times W)\times W\subset W$ is orthogonal to $X$, $Y$ and $J(X)$. Therefore, there is $\alpha\in\RR$ such that
$(X\times Y)\times J(X)=\alpha J(Y)$. Thus     
$$
(f(\ll),f(\ii\ll),f(\jj\ll),f(\kk\ll))=(X,J(X),Y,\alpha J(Y)),
$$
and, in particular,  $f(\HH^\perp)=W$.  Taking norms, $\alpha^2=n(X)n(Y)n(J(X))=1$ and necessarily $\alpha=\pm1$.
In case $\alpha=1$, we immediately check that 
$$
\begin{array}{ll}
f\tau(\ll)=J(X)=Jf(\ll), \qquad &f\tau(\ii\ll)=-X=J^2(X)=Jf(\ii\ll),\\
 f\tau(\jj\ll)=J(Y)=Jf(\jj\ll), \qquad\  & f\tau(\kk\ll)=-Y=Jf(\kk\ll).
 \end{array}
 $$
Hence, we get $f\tau\vert_{\HH^\perp}=Jf\vert_{\HH^\perp}$, as required.
\end{itemize}
\end{proof}

Note that $J=d_{\ii}^l\vert_{\HH^\perp}$ satisfies the following additional property:
$$
J(X)\times J(Y)=((\ii q)\ll)((\ii p)\ll)=-(\overline{\ii p})(\ii q)=-\bar p\bar\ii\ii q=-\bar pq=(q\ll)(p\ll)=X\times Y,
$$
for any $X=q\ll$, $Y=p\ll\in\HH\ll=\HH^\perp$. Nevertheless,  $(\HH\ll,d_{\ii}^r)\in N_4$ does not preserve  the cross product $W\times W\subset
W^\perp$. This tells   
 that the  action of $G_2$ on $N_4$ is not transitive and  leads us to consider
$$
M_4:=\{(W,J):W\in M_1', \,J\in\SO(W,n),\, J^2=-\id,\, J(X)\times J(Y)=X\times Y\  \forall X,Y\in W\}.
$$
Since $G_2$ preserves $\times$, the Lie group $G_2$   acts on $M_4$ too,  but now this action is transitive.
Indeed, if $(W,J)\in M_4$, take $X$ and $Y$ as in Lemma~\ref{le_M4}. Then
$$
(X\times Y)\times J(X)=(J(X)\times J(Y))\times J(X)\stackrel{\eqref{eq_util}}{=}J(Y)
$$ 
and item b) allows to find a concrete automorphism $f\in G_2$ with $f\cdot(\HH\ll,d_{\ii}^l)=(W,J)$. Hence, taking into account that the isotropy group at $(\HH\ll,d_{\ii}^l)$
is $H_4\cong \U(\HH,\sigma)\cong \U(2)$, we get $M_4\cong G_2/\U(2)$.
 According to \cite[Chap. 13]{libroBG}, the manifold  $M_4$ can be identified with the twistor space $\mathcal{Z}$ of the quaternion-K\"ahler manifold $G_{2}/\SO(4)$.

In order to write the \emph{good} complex structures involved in the definition of $M_4$, without reference to an outer object, let us consider for any $W\in M_1'$ the ternary product
given by
$$\{\ ,\ ,\ \}\colon W\times W\times W\to W, \quad\{X,Y,Z\}:=(X\times Y)\times Z.$$
Now, we consider
$$
\mathrm{Aut}(W,n,\{\ ,\ ,\ \}):=\{J\in\SO(W,n):\{J(X),J(Y),-\}=\{X,Y,-\}\  \forall X,Y\in W\},
$$
and then it is easy to check that 
$$
M_4=\{(W,J):W\in M_1', \, J\in\mathrm{Aut}(W,n,\{\ ,\ ,\ \}),\ J^2=-\id\}.    
$$


\subsection{ The twistor space of quaternionic structures $G_{2}/\SU(2)^{r}$} \label{elM5}
For any real vector space $W$, a pair $(J,K)$ of endomorphisms  $J,K\colon W\to W$ is said to be a \emph{quaternionic structure} on $W$ if $J^2=K^2=-\id_W$ and $JK=-KJ$. In other words, $J$, $K$ and $JK$ are three complex structures which anticommute.
In this case, $W$ can be endowed with a quaternionic vector space structure by taking as a scalar multiplication $W\times \HH\to W$, 
$w(\alpha+\beta\ii+\gamma\jj+\delta\kk):=\alpha+ \beta KJ(w)+\gamma J(w)+\delta K(w)$.

The group $G_2$ acts on the set 
$$
M_5:=\left\{
\begin{array}{rl}
(W,(J,K)):&W\in M_1'  \\
&(J,K)\textrm{   quaternionic structure on }W\\
&J,K\in\mathrm{Aut}(W,n,\{\ ,\ ,\ \})
\end{array}\right\}.
$$
The action of $f\in\mathrm{Aut}(\OO)$ is given by $f\cdot(W,(J,K))=(f(W),(fJf^{-1},fKf^{-1}))$,   well defined since $f$ preserves the ternary product $\{\ ,\ ,\ \}$.
As $d_a^ld_b^l(q_1+q_2\ll)=(abq_2)\ll$, then we have $d_a^ld_b^l\vert_{\HH^\perp}=d_{ab}^l\vert_{\HH^\perp}$ and therefore $(\HH^\perp,(d_\ii^l,d_\jj^l))\in M_5$. The subgroup of $G_{2}$ which fixes the element $(\HH^\perp,(d_\ii^l,d_\jj^l))\in M_5$ is
$$
H_5=\{f\in\mathrm{Aut}(\OO):f({\HH}^\perp)\subset \HH^\perp,
\ fd_{\ii}^l\vert_{\HH^\perp}=d_{\ii}^lf\vert_{\HH^\perp},
\ fd_{\jj}^l\vert_{\HH^\perp}=d_{\jj}^lf\vert_{\HH^\perp}\}.
$$
Note that every element $f\in H_{5}$ commutes with $d_{\kk}^l\vert_{\HH^\perp}=d_{\ii}^l\vert_{\HH^\perp}d_{\jj}^l\vert_{\HH^\perp}$ too,  so that  the Lie algebra of $H_5$  is
$$
\{d\in\der(\OO)_{\bar0}:dd_{a}^l\vert_{\HH^\perp}=d_{a}^ld\vert_{\HH^\perp}\  \forall a\in\HH_0\}=\{d\in\der(\OO)_{\bar0}:[d,\hh^l]=0\}=\hh_5,
$$
because of course any $d\in\der(\OO)_{\bar0}$ commutes with $d_{a}^l\vert_{\HH}\equiv0$. 
This tells us that we have the isomorphism
$$
H_5\cong \SU(\HH,\sigma), \quad f\mapsto f\vert_\HH.
$$

The transitivity of the action of $G_2$ on $M_5$ is a consequence of the following lemma.

\begin{lemma}\label{le_M5}
Fix $(W,(J,K))\in M_5$. 
\begin{itemize}
\item[a)] For any $X\in W$ with $n(X)=1$, we have
\begin{equation*}\label{eq_W5}
W=\span{X,J(X),K(X),JK(X)}, \quad W^\perp=\span{X\times J(X),X\times K(X),X\times JK(X)};
\end{equation*}
\item[b)] The automorphism of $\OO$ which sends $(\ii,\jj,\ll)$ to the Cayley triple $(X\times J(X),X\times K(X),X)$ satisfies that $f\cdot (\HH^\perp,(d_\ii^l,d_\jj^l))=(W,(J,K))$. 
\end{itemize}
\end{lemma}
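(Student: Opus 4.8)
The plan is to follow the template of Lemma~\ref{le_M4}, but exploiting the extra structure available here: since $J,K\in\mathrm{Aut}(W,n,\{\ ,\ ,\ \})$, both preserve the full binary cross product, that is, $J(X)\times J(Y)=X\times Y$ and $K(X)\times K(Y)=X\times Y$ for all $X,Y\in W$. (This follows from the definition $(J(X)\times J(Y))\times Z=(X\times Y)\times Z$ for every $Z\in W$, because the map $A\mapsto A\times Z$ is injective on $W^\perp$ for a fixed unit $Z\in W$; indeed $n(A\times Z)=n(A)n(Z)-n(A,Z)^2=n(A)$ when $A\in W^\perp$, $Z\in W$.) I will use repeatedly the identities in Eq.~\eqref{eq_util} together with $n(X\times Y,X\times Z)=n(X)\,n(Y,Z)$, which comes from the composition property $n(xa,xb)=n(x)n(a,b)$.

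For part a), I would first record that $J$, $K$ and $JK$ are metric complex structures on $W$ (that $(JK)^2=-\id$ uses $JK=-KJ$), so each of $J(X),K(X),JK(X)$ is orthogonal to $X$, and $n(v,J(v))=0$ for every $v$. A short list of computations of the type $n(J(X),K(X))=n(J^2(X),JK(X))=-n(X,JK(X))=0$ then shows that $\{X,J(X),K(X),JK(X)\}$ is orthonormal, hence a basis of the $4$-dimensional space $W$. Since $W\in M_1'$ forces $\Omega(W,W,W)=0$, i.e. $W\times W\subset W^\perp$, the three vectors $X\times J(X)$, $X\times K(X)$, $X\times JK(X)$ lie in the $3$-dimensional $W^\perp$; computing their norms and pairwise products through $n(X\times Y,X\times Z)=n(X)n(Y,Z)$ shows they are orthonormal and therefore span $W^\perp$.

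For part b), I would first verify that $(X\times J(X),X\times K(X),X)$ is a Cayley triple: orthonormality is immediate from part a) and the fact that a cross product is orthogonal to its factors, while $\Omega(X\times J(X),X\times K(X),X)=n\big(X\times J(X),(X\times K(X))\times X\big)=n(X\times J(X),K(X))=0$, using $(X\times K(X))\times X=K(X)$ together with $X\times J(X)\in W^\perp$ and $K(X)\in W$. Let $f\in\mathrm{Aut}(\OO)$ be determined by $f(\ii)=X\times J(X)$, $f(\jj)=X\times K(X)$, $f(\ll)=X$. Computing images on the basis $\{\ll,\ii\ll,\jj\ll,\kk\ll\}$ of $\HH^\perp$ via Eq.~\eqref{eq_util} and the Cayley--Dickson rule gives $f(\ll)=X$, $f(\ii\ll)=(X\times J(X))\times X=J(X)$, and $f(\jj\ll)=(X\times K(X))\times X=K(X)$.

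The main obstacle is the last image, $f(\kk\ll)$. Using $\kk\ll=\jj(\ii\ll)$ one obtains $f(\kk\ll)=(X\times K(X))\times J(X)$, which, since $f\in\SO(\OO_0,n)$, is a unit vector of $W$ orthogonal to $X,J(X),K(X)$ and hence equals $\pm JK(X)$; the whole lemma reduces to fixing this sign as $+$. This is exactly where the binary preservation enters decisively, since $J(X)\times JK(X)=J(X)\times J(K(X))=X\times K(X)$, whence
$$
f(\kk\ll)=(X\times K(X))\times J(X)=(J(X)\times JK(X))\times J(X)=-J(X)\times(J(X)\times JK(X))=n(J(X))\,JK(X)=JK(X),
$$
using anticommutativity and $V\times(V\times W)=-n(V)W$. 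With $f(\kk\ll)=JK(X)$ established, $f(\HH^\perp)=\span{X,J(X),K(X),JK(X)}=W$, and verifying $f d_\ii^l=Jf$ and $f d_\jj^l=Kf$ on the four basis vectors of $\HH^\perp$ (each a one-line identity such as $J(f(\jj\ll))=JK(X)=f(\kk\ll)=f(d_\ii^l(\jj\ll))$) shows $f\cdot(\HH^\perp,(d_\ii^l,d_\jj^l))=(W,(J,K))$, which yields the transitivity of the $G_2$-action on $M_5$.
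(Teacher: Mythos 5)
Your proposal is correct and follows essentially the same route as the paper: the same orthonormal frame in part a), the same Cayley triple $(X\times J(X),X\times K(X),X)$ and the same images $f(\ii\ll)=J(X)$, $f(\jj\ll)=K(X)$ in part b), with the invariance of the cross product under $J$ used to pin down $f(\kk\ll)=JK(X)$. The only (harmless) variation is that you fix the sign of $f(\kk\ll)$ by the direct computation $(J(X)\times JK(X))\times J(X)=JK(X)$ instead of introducing a scalar $\alpha$, taking norms, and then invoking $J\in\mathrm{Aut}(W,n,\{\ ,\ ,\ \})$ as the paper does.
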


\begin{proof}
In order  to check that $\{X,J(X),K(X),JK(X)\}$ is a basis of $W$, it is enough to check that it is an orthonormal set. Indeed, the four elements have norm 1 since 
$J,K,JK\in\SO(W,n)$; and $X$ is orthogonal to the others
as in Lemma~\ref{le_M4}. For another pair of elements, $n(J(X),K(X))=n(J^2(X),JK(X))=-n(X,JK(X))=0$. 
Besides, as  $\Omega(W,W,W)=0$, then $W\times W\subset W^\perp$ and we get a).

Retain Eq.~\eqref{eq_util} for the remaining computations. We have a Cayley triple 
$$
\Omega(X,X\times J(X),X\times K(X))=n(-J(X),X\times K(X))\in n(W,W^\perp)=0,
$$
 so we can consider
 the automorphism $f\in\mathrm{Aut}(\OO)$ determined by $f(\ii)=X\times J(X) $, $f(\jj)=X\times K(X) $, $f(\ll)=X$. The rest of images necessarily are
$$
\begin{array}{ll}
f(\ii\ll)=(X\times J(X))\times X=J(X),&
f(\jj\ll)=K(X),\\
f(\kk)=f((\jj\ll)(\ii\ll))=K(X)\times J(X),&
f(\kk\ll)=X\times (J(X)\times K(X)).
\end{array}
$$
Let us check that $f(\kk\ll)=JK(X)$. Of course there is $\alpha\in\RR$ such that $f(\kk\ll)=\alpha JK(X)$, 
since $X\times (J(X)\times K(X))\in W$ is orthogonal to $X$, $J(X)$ and $K(X)$.
In particular, we have $f(\HH\ll)=W$.
Taking norms gives $\alpha\in\pm1$, but, using besides that $J\in\mathrm{Aut}(W,n,\{\ ,\ ,\ \})$ we can conclude $\alpha=1$ as a consequence of the following computation:
$$
J(X)\times K(X)=-X\times \big(X\times (J(X)\times K(X))\big)=
-X\times \alpha JK(X)=-J(X)\times \alpha J^2K(X).
$$
Having established   $f(\kk\ll)= JK(X)$,   it is easily deduced that $fd_\ii^l=Jf$ and $fd_\jj^l=Kf$ in $\HH^\perp$.
\end{proof}

Hence, as  $H_5\cong \SU(2)$, we have  that $M_5\cong G_2/\SU(2)$ 
and the natural projections can be explicitly given by
$$
M_5\longrightarrow M_4\longrightarrow M'_1,\qquad (W,(J,K))\mapsto (W,J)\mapsto W,
$$
which are fiber bundles with standard fiber $\U(2)/\SU(2)$ and $\SO(4)/\U(2)$, respectively.
There is also a natural projection $M_0\to M_5$ that sends $(X_0,X_1,X_2)\in M_0$ to $(W,(J,K))$ for
$$
W=\span{X_0,X_0\times X_1,X_0\times X_2,X_0\times(X_1\times X_2)},\quad  J=L_{X_1},\ K=L_{X_2},
$$
and $L_X\colon W\to W$  given by $L_X(Y)=X\times Y$ if $X\in W^\perp$.

 \begin{remark}
 
 The quotient $G_{2}/\SU(2)^{r}$ has a very rich geometric structure. Namely, $G_{2}/\SU(2)^{r}$ is a $3$-Sasakian homogeneous manifold \cite[Chap. 13]{libroBG}. A general study of invariant linear connections on $3$-Sasakian homogeneous manifolds can be found in \cite{nues3Sas}. At this point, we would like to do a  personal remark.  The manifold $G_{2}/\SU(2)^{r}$ played a key role in the motivation of \cite{nues3Sas}
 and its algebraic structure inspired the notion of $3$-Sasakian data there.
\end{remark}


\subsection{ The irreducible isotropy} \label{se_irr}  

A $G$-homogeneous manifold $M=G/H$ is called an isotropy-irreducible space if the linear isotropy representation of $H$ is irreducible. 
In this case, Wolf proved in \cite{Wolf68} that $M$ admits a unique $G$-invariant Riemannian metric (up to homotheties), which is necessarily an Einstein metric. In the same paper,
Wolf classified the $G$-homogeneous Riemannian manifolds $G/H$ such that the connected component of the identity in $H$ has an irreducible isotropy representation, 
called \emph{strongly isotropy irreducible spaces} (equivalent if $H$ is connected). 
This classification can be consulted also in \cite[Chapter~7, \S H]{Besse}, and precisely   Table~6,\,p.~203, contains our $G_2/\SO(3)$, although no more geometric or topological information is provided,
only a mention to \cite{Dyn2} to explain that the fact that $\hh$ is a maximal subalgebra of $\mathfrak{g}$ is sufficient to characterize the embedding.
Our search for another  reference of $G_2/\SO(3)$ 
has been unsuccessful.\smallskip

 From an algebraic viewpoint, 
\emph{Lie-Yamaguti algebras} (also called   \emph{generalized Lie triple systems})  are binary-ternary
algebras $(\mm,\bullet,[\ ,\ ,\ ])$ satisfying a list of 6 identities, defined precisely to translate   reductive homogeneous spaces to an algebraic structure. 
Similarly, the Lie-Yamaguti algebras which are
irreducible as modules over their Lie inner derivation algebra are the algebraic counterpart
of the isotropy irreducible homogeneous spaces. 
The work  \cite{Fabibueno} is devoted to study irreducible Lie-Yamaguti algebras of generic type.
 The generic case occurs when both the inner derivation algebra $\hh(\mm)$ and the standard enveloping algebra $\mathfrak{g}(\mm)$ are simple. Of course this is our situation. Some general facts are that $\mm$ coincides with $\hh(\mm)^\perp$, the orthogonal with respect to the Killing form of $\mathfrak{g}(\mm)$, and that $\hh(\mm)$ is a maximal subalgebra of $\mathfrak{g}(\mm)$.  Although  the pair $(G_2,A_1)$ appears explicitly in \cite[Theorem~5.1.ii)]{Fabibueno}   and in \cite[Table~9]{Fabibueno}, the work does not provide a concrete description and the classification is
transferred from the complex case.  
(Recall that we have provided in Proposition~\ref{prop_TDSppal}   a description of a principal subalgebra of $\mathfrak{g}_2$ in terms of derivations of the octonions, but not a description of its -unique- invariant complement.)   
 A concrete description (valid for the complex and the split case, but not for the compact one) appears in \cite[\S6]{Dixmier84}, where Dixmier considers simultaneously several simple nonassociative algebras defined by the
transvection of binary forms. The paper contains   some typos in the scalars involved in the $G_2$-construction, but a  corrected version appears in \cite[Theorem~4.6]{LYg2}
as follows. Denote by  $V_n$ the complex vector space  
of the homogeneous polynomials of degree $n$ in two variables $X$ and $Y$. 
For any $f\in V_n$, $g\in V_m$, consider the \emph{transvection}
\[ 
{(f,g)_q=\frac{(n-q)!}{n!}\frac{(m-q)!}{m!}
\sum_{i=0}^q(-1)^{i}\binom{q}{i}
\frac{\partial^q f}{\partial X^{q-i}\partial Y^i}\frac{\partial^q g}{\partial X^i\partial Y^{q-i}}} \in V_{m+n-2q}.
\]
The  complex Lie algebra of type $G_2$ can be obtained as   
 the standard enveloping algebra of the Lie-Yamaguti algebra
 $\mm=V_{10}$, with binary and ternary products given by 
 $$
 f_1\bullet f_2=( f_1, f_2)_5,\qquad
 [ f_1, f_2, f_3]=\frac{25}{378}(( f_1, f_2)_9, f_3)_1,
 $$
 if $f_i\in V_{10}$. Here $\hh(\mm)\cong (V_2,(\ ,\ )_1)$ turns out to be a principal three-dimensional subalgebra of $\mathfrak{g}(\mm)$.
 This Lie-Yamaguti algebra $\mm=V_{10}$ has received some attention in \cite{BH}, where its polynomial identities of low degree have been studied.
 Thus   $( V_2\oplus V_{10},[\ ,\ ])$ is a exceptional Lie algebra of type $G_2$  
 for the bracket
\begin{equation}\label{eq_transv}
 \begin{array}{l}
 {[}g_1,g_2]:=(g_1,g_2)_1,\\
 {[}g,f]:=5(g,f)_1,\\
 {[}f_1,f_2]:=  \frac{5}{378}(f_1,f_2)_9+(f_1,f_2)_5,\\
 \end{array}
\end{equation}
if $g,g_i\in V_2$ and $f,f_i\in V_{10}$.         
   The element $h=4XY\in V_2$ is ad-diagonalizable with integer eigenvalues, since
   $[h,X^kY^{2-k}]=(2-2k)X^kY^{2-k}$ and
    $[h,X^kY^{10-k}]=(10-2k)X^kY^{10-k}$. So, although the construction remains valid for the real field,  the obtained algebra is the split algebra of type $G_2$, not the compact one.
    The following result is key for our purposes. 
    
    \begin{proposition}\label{pr_conj}
    There is,  up to conjugation, only one principal three-dimensional subalgebra of the compact algebra $\mathfrak{g}_2$.
    \end{proposition}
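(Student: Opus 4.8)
The plan is to carry out the entire argument inside the compact group $G_2$, thereby avoiding any descent from the complex group (which is the usual pitfall, since complex conjugacy of subalgebras need not descend to the compact form). Let $\mathfrak{s},\mathfrak{s}'\subset\mathfrak{g}_2$ be two principal three-dimensional subalgebras. Each is isomorphic to $\suf_2$ and carries, as in the construction of Proposition~\ref{ref_compactas}, a distinguished element $w\in\mathfrak{s}$ (the one playing the role of $\mathbf{i}h$) whose complexified characteristic $h=-\mathbf{i}w$ satisfies the $\slf_2$-triple relations. Because $\mathfrak{s}$ is principal, the eigenvalue computation behind Proposition~\ref{ref_compactas} shows that the centralizer of $h$ in $\mathfrak{g}_2^\CC$ is a Cartan subalgebra; hence $w$ is a regular element of $\mathfrak{g}_2$ and its centralizer is a maximal torus. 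Since all maximal tori of the compact group $G_2$ are conjugate, I would first conjugate $\mathfrak{s}$ and $\mathfrak{s}'$ so that $w,w'$ both lie in one fixed maximal abelian subalgebra $\mathfrak{t}$, whence $h,h'\in\mathfrak{t}^\CC$, a fixed Cartan subalgebra $\mathfrak{h}$ of $\mathfrak{g}_2^\CC$.

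Next I would normalize the characteristics. By Lemma~\ref{le_elh}, together with the module decomposition $\mathfrak{g}_2^\CC\cong V(2)\oplus V(10)$ recorded in the Example, the eigenvalues of $\ad h$ on $\mathfrak{g}_2^\CC$ are integers, so $\alpha(h)\in\ZZ$ for every root $\alpha$; as $h$ is regular it lies in an open Weyl chamber. The Weyl group is realized by inner automorphisms coming from $N_{G_2}(T)$, so after a further conjugation I may assume both $h$ and $h'$ are dominant, and then the normalization $\alpha_i(h)=2=\alpha_i(h')$ on the simple roots forces $h=h'$. At this point $\mathfrak{s}^\CC=\langle e,h,f\rangle$ and $\mathfrak{s}'^\CC=\langle e',h,f'\rangle$ share the same characteristic $h$, while $e,e'$ lie in the $+2$-eigenspace of $\ad h$, which is precisely $\mathfrak{g}_{\alpha_1}\oplus\mathfrak{g}_{\alpha_2}$.

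The crucial and most delicate step is to use the compact real form to pin down $e$. Let $\theta$ be the conjugation of $\mathfrak{g}_2^\CC$ fixing $\mathfrak{g}_2$; the compactness of the triple amounts to $f=-\theta(e)$, so writing $e=a_1e_{\alpha_1}+a_2e_{\alpha_2}$ one gets $f=\overline{a_1}\,f_{\alpha_1}+\overline{a_2}\,f_{\alpha_2}$. Since $\alpha_1-\alpha_2$ is not a root of $\mathfrak{g}_2$, the cross brackets $[e_{\alpha_i},f_{\alpha_j}]$ with $i\neq j$ vanish, and hence $[e,f]=|a_1|^2h_{\alpha_1}+|a_2|^2h_{\alpha_2}$. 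Comparing with $h=c_1h_{\alpha_1}+c_2h_{\alpha_2}$ yields $|a_i|^2=c_i$, where the $c_i>0$ by Proposition~\ref{ref_compactas}. Thus the moduli of the coefficients of $e$ are completely determined by the compact form, and only their phases remain free. The maximal torus $T$ centralizes $h$ and acts on $e_{\alpha_i}$ through the unitary characters $\alpha_i$; since $\alpha_1,\alpha_2$ are linearly independent the differential of $t\mapsto(\alpha_1(t),\alpha_2(t))$ is an isomorphism $\mathfrak{t}\to\RR^2$, so the map $T\to\U(1)^2$ is surjective and I can pick $t\in T$ with $\Ad(t)e=e'$. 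As $t\in G_2$ is compact, $\Ad(t)$ commutes with $\theta$ and fixes $h$, hence also sends $f=-\theta(e)$ to $f'=-\theta(e')$; therefore $\Ad(t)\mathfrak{s}^\CC=\mathfrak{s}'^\CC$, and taking $\theta$-fixed points gives $\Ad(t)\mathfrak{s}=\mathfrak{s}'$.

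The single real obstacle is exactly this last paragraph. The temptation is to invoke the classical fact that principal nilpotents form one complex orbit and then descend, but complex conjugacy may fail to descend to the compact form. What rescues the argument is that the reality condition $f=-\theta(e)$ rigidly fixes $|a_i|^2=c_i$, leaving only phase freedom, which the compact torus already supplies, so no non-compact part of the complex Cartan is ever invoked. Finally, by the Example the principal case coincides with item a) of Theorem~\ref{th_clasif}, so this simultaneously establishes uniqueness up to conjugation of the subalgebra on which $\OO_0$ is absolutely irreducible.
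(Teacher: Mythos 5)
Your proof is correct, but it follows a genuinely different route from the paper's. The paper argues representation-theoretically: given two principal decompositions $\mathfrak{g}_2=\mathfrak s\oplus\mathfrak m=\mathfrak s'\oplus\mathfrak m'$, it builds a linear bijection $\Psi=\varphi\oplus\rho$ from the uniqueness of the $11$-dimensional irreducible $\suf_2$-module and the one-dimensionality of $\hom_{\mathfrak s}(\mathfrak m\otimes\mathfrak m,\mathfrak m)$ and $\hom_{\mathfrak s}(\mathfrak m\otimes\mathfrak m,\mathfrak s)$, reducing everything to a single undetermined scalar $\beta$, which is then shown to equal $1$ via the Jacobi identity and an explicit transvection computation in the model $V_2\oplus V_{10}$; the resulting automorphism of $\mathfrak{g}_2$ is automatically inner. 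You instead run the classical structure-theoretic argument entirely inside the compact group: conjugate the regular semisimple generators into one maximal torus, normalize the characteristics by the Weyl group, and observe that the reality condition $f=-\theta(e)$ pins down the moduli $|a_i|^2=c_i$ of the nilpositive element, leaving only phases, which the torus supplies since $\alpha_1,\alpha_2$ are independent. This is a clean way to sidestep the descent problem (complex conjugacy of principal nilpotents need not a priori descend to the compact form), and it yields an explicit inner conjugation; the paper's method has the advantage of being field-independent in spirit and of fitting the Lie--Yamaguti framework used throughout. Two small points you should make explicit: (i) that a dominant regular $h$ with $\mathrm{ad}\,h$-spectrum $\{0,0,\pm2,\pm2,\pm4,\pm6,\pm8,\pm10\}$ (forced by $\mathfrak{g}_2^\CC\cong V(2)\oplus V(10)$) must satisfy $\alpha_1(h)=\alpha_2(h)=2$, since the two smallest positive eigenvalues are attained exactly at the simple roots; and (ii) that the normalization $\theta(e)=-f$ is available only after rescaling the triple --- one first shows $\theta(e)=\lambda f$ with $\lambda<0$ using the positive definiteness of $-\kappa(\cdot,\theta(\cdot))$ on the compact form, then rescales $e$ and $f$. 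Both are standard and do not affect the validity of the argument.
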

    
    This is very well-known for the complex case, but we include one proof of the real case for completeness, due to the lack of a suitable reference. It essentially arose from conversations with A. Elduque.
    
    \begin{proof}
    We dealt with the existence in Section~\ref{se_ppal}. 
    Now assume that we have two decompositions $\mathfrak{g}_2=\mathfrak s\oplus \mathfrak m=\mathfrak s'\oplus \mathfrak m'$ with $\mathfrak s$ and $\mathfrak s'$ principal subalgebras, 
    $[\mathfrak s,\mathfrak m]\subset \mathfrak m$ and   $[\mathfrak s',\mathfrak m']\subset \mathfrak m'$, and we are going to provide an automorphism of $\mathfrak{g}_2$ which sends 
    $\mathfrak s$ to $\mathfrak s'$. 
    Denote by $\pi_{\mathfrak s}$ and $\pi_{\mathfrak m}$ the projections of $\mathfrak{g}_2=\mathfrak s\oplus \mathfrak m$ on the subspaces $\mathfrak s$ and $\mathfrak m$ 
    respectively, and similarly by
    $\pi_{\mathfrak s'}$ and $\pi_{\mathfrak m'}$ the projections with respect to the other decomposition. Write, for short, $[x,y]_{\mathfrak t}=\pi_{\mathfrak t}([x,y])$, for any $x,y\in\mathfrak{g}_2$ and any of our subspaces $\mathfrak t$ of $\mathfrak{g}_2$.
    
    First, $\mathfrak{g}_2$ is compact, so that the three-dimensional algebras $\mathfrak s$ and $\mathfrak s'$ are necessarily isomorphic to $\mathfrak{su}_2$ (there are no nilpotent elements in $\mathfrak{g}_2$). So there exists an   isomorphism of Lie algebras $\varphi\colon \mathfrak s\to \mathfrak s'$. 
    
    Second, there is only one irreducible $\mathfrak s$-module of dimension $11$. Hence, there is a bijective linear map $\rho\colon \mathfrak m\to \mathfrak m'$ such that $\rho([s,x])=[\varphi(s),\rho(x)]$ for any $s\in \mathfrak s$ and $x\in \mathfrak m$.

    Third, $\dim_\RR\hom_{\mathfrak s}(\mathfrak m\otimes \mathfrak m,\mathfrak m)=1$ since this is the situation after complexifying 
    (where the set of homomorphisms is spanned by $(\ ,\ )_5$). 
    Two $\mathfrak s$-invariant bilinear maps  from $\mathfrak m\times \mathfrak m$ to $\mathfrak m$ are
    $\rho^{-1}\circ \pi_{\mathfrak m'}\circ [\ ,\ ]\vert_{\mathfrak m'\times \mathfrak m'}\circ (\rho\times\rho)  $ and 
    $\pi_{\mathfrak m}\circ [\ ,\ ]\vert_{\mathfrak m\times \mathfrak m}$, which differ into a scalar,
    so that there is $\alpha\in\RR$ such that 
    $[\rho(x),\rho(y)]_{\mathfrak m'}=\alpha \rho([x,y]_{\mathfrak m})$ for all $x,y\in\mathfrak m$. Changing $\rho$ by $\frac{\rho}{\alpha}$, we can assume that $\alpha=1$.

    Fourth, $\dim_\RR\hom_{\mathfrak s}(\mathfrak m\otimes \mathfrak m,\mathfrak s)=1$ since this is the situation after complexifying
     (where the set of homomorphisms is spanned by $(\ ,\ )_9$).  
    Two $\mathfrak s$-invariant bilinear maps  from $\mathfrak m\times \mathfrak m$ to $\mathfrak s$ are
    $\varphi^{-1}\circ \pi_{\mathfrak s'}\circ [\ ,\ ]\vert_{\mathfrak m'\times \mathfrak m'}\circ (\rho\times\rho)  $ and 
    $\pi_{\mathfrak s}\circ [\ ,\ ]\vert_{\mathfrak m\times \mathfrak m}$, which differ into a scalar,
    so that there is $\beta\in\RR$ such that 
    $[\rho(x),\rho(y)]_{\mathfrak s'}=\beta \varphi([x,y]_{\mathfrak s})$ for all $x,y\in\mathfrak m$.
    Now define
    $$
    \Psi\colon\mathfrak{g}_2\to\mathfrak{g}_2, \ \Psi(s+x):=\varphi(s)+\rho(x),
    $$
    for any $s\in \mathfrak s$ and $x\in \mathfrak m$. Let us check that $\Psi$ is an automorphism of the algebra, with $\Psi(\mathfrak s)=\mathfrak s'$.
  From the above it is clear that, for any $s,s_1,s_2\in\mathfrak s$, $x,x_1,x_2\in\mathfrak m$,
 \begin{equation}\label{eqss}
 \begin{array}{ll}
  \Psi([s_1,s_2])=[\Psi(s_1),\Psi(s_2)],&
  \Psi([s,x])=[\Psi(s),\Psi(x)],\\
  \Psi([x_1,x_2]_{\mathfrak s})=\beta^{-1}[\Psi(x_1),\Psi(x_2)]_{\mathfrak s'},\qquad&
  \Psi([x_1,x_2]_{\mathfrak m})= [\Psi(x_1),\Psi(x_2)]_{\mathfrak m'};
  \end{array}
 \end{equation}
  so that the map $\Psi$ will be an automorphism if and only if $\beta=1$. 
  
  Fifth, use the Jacobi identity. Denote, with the indices modulo 3, the Jacobian operator  by 
  $J(x_1,x_2,x_3)=\sum_{i=1}^3[[x_i,x_{i+1}],x_{i+2}]$, which is $0$ for any choice of elements $x_i$ in $\mathfrak{g}_2$. 
  For any  $x_1,x_2,x_3\in\mathfrak m$, Eqs.~\eqref{eqss} give
   $$\begin{array}{ll}
   \pi_{\mathfrak m'}\circ\Psi([[x_1,x_2],x_3])\hspace{-2pt}&=\pi_{\mathfrak m'}\circ\Psi([[x_1,x_2]_{\mathfrak m},x_3])+\pi_{\mathfrak m'}\circ\Psi([[x_1,x_2]_{\mathfrak s},x_3])\\
 &=[[\Psi(x_1),\Psi(x_2)]_{\mathfrak m'},\Psi(x_3)]_{\mathfrak m'}+\beta^{-1}[[\Psi(x_1),\Psi(x_2)]_{\mathfrak s'},\Psi(x_3)].
   \end{array}
   $$ 
  At the same time we can apply Jacobi identity to get 
  $$\begin{array}{ll}
  0 &= \pi_{\mathfrak m'}(J(\Psi(x_1),\Psi(x_2),\Psi(x_3) )  -\Psi(J(x_1,x_2,x_3))   ) \\
  &=(\beta^{-1}-1)\sum_{i=1}^3[[\Psi(x_i),\Psi(x_{i+1})]_{\mathfrak s'},\Psi(x_{i+2})].
  \end{array}
  $$
  The expression $\sum_{i=1}^3[[\Psi(x_i),\Psi(x_{i+1})]_{\mathfrak s'},\Psi(x_{i+2})]$ cannot be identically 0 in $\mathfrak m$, since it does not vanishes after complexification. Indeed, taking in mind Eq.~\eqref{eq_transv}, chosing $\Psi(x_1)=f_1= X^{10}$, $\Psi(x_2)=f_2= Y^{10}$ and $\Psi(x_3)=f_3= XY^{9}$, the above expression coincides with
  $$
  \frac{25}{378}\sum_{i=1}^3((f_{i},f_{i+1})_9,f_{i+2})_1=\frac{25}{378}
  \big((XY,XY^9)_1+0-\frac1{10}(X^2,Y^{10})_1\big)= 
  \frac{5}{252}XY^9\ne0.
  $$
     Thus $\beta^{-1}-1=0$ and $\Psi$ is indeed an automorphism of $\mathfrak{g}_2$.
        \end{proof}   
 \medskip

We are now prepared to give a model of the homogeneous space $G_2/\SO(3)^{\textrm{irr}}$. Consider the set 
\begin{equation}\label{eq_M8}
M_8:=\{\mathfrak s\le\mathfrak{g}_2: \mathfrak s \textrm{ principal subalgebra}\}.
\end{equation}
Recall that the   group $G_2$ and the adjoint group $ \mathrm{Aut}(\mathfrak{g}_2)$ are isomorphic and a precise isomorphism is given by
$$
f\in G_2\mapsto \tilde f\in  \mathrm{Aut}(\mathfrak{g}_2),\qquad \tilde f(d)=fdf^{-1}\textrm{ for all $d\in\mathfrak{g}_2$.}
$$
 Hence $G_2$ acts on $M_8$ by $f\cdot \mathfrak s=\tilde f(\mathfrak s)=\{fsf^{-1}:s\in\mathfrak s\}$, which is a principal subalgebra of $\mathfrak{g}_2$ if $\mathfrak s$ so is.
By Proposition~\ref{pr_conj}, the action is transitive. If we denote by $H_\mathfrak s$ the isotropy group of a fixed principal subalgebra $\mathfrak s\in M_8$ (for instance, 
$\mathfrak s=\hh_8$), let us check that its Lie algebra
 $\hh_\mathfrak s$ coincides with $\mathfrak s$, which would prove that $\hh_\mathfrak s$ is principal, as required. As $H_\mathfrak s=\{f\in G_2:fsf^{-1}\in\mathfrak s \ \forall s\in \mathfrak s\}$, then its Lie algebra is
 $\hh_\mathfrak s=\{d\in \mathfrak{g}_2:[d,s]\in\mathfrak s \ \forall s\in \mathfrak s\}$, the normalizer of $\mathfrak s $, which obviously contains   $\mathfrak s $. 
 Besides there is an absolutely irreducible $\mathfrak s$-module $\mathfrak m$ such that $\mathfrak{g}_2=\mathfrak s\oplus \mathfrak m$. 
If there is $0\ne x\in \hh_\mathfrak s\setminus \mathfrak s$, we can assume without loss of generality (by subtracting its projection on $\mathfrak s$) that  
 $0\ne x\in \hh_\mathfrak s\cap\mathfrak m $.  
 On one hand, $[x,\mathfrak s]\subset \mathfrak s$ because $x$ belongs to the normalizer of $\mathfrak s$, 
 and on the other hand,  $[x,\mathfrak s]\subset [\mathfrak m,\mathfrak s]\subset\mathfrak m$. 
 Hence $[x,\mathfrak s]\subset\mathfrak s\cap \mathfrak m=0$, so that $x\in\mathfrak z(\mathfrak s)$. This contradicts the well-known fact that
 $z(\mathfrak s)=0$, which can be consulted in \cite[Chapter~6, Theorem~2.6]{enci41}.
 Alternatively, it can be directly checked in this case, simply by complexifying. 
 Indeed, an element in $\mathfrak m^\CC$ which is in the centralizer of $\mathfrak s^\CC=\textrm{span}\langle h,e,f\rangle$ would be in the one-dimensional $0$-weight space for $h$,
  but 
 the adjoint action of $e$ on the $0$-weight space is of course nonzero, giving the whole $2$-weight space and getting a contradiction.
 To summarize, we can identify $M_8$ with 
 $G_2/\SO(3)^{\textrm{irr}} $, and hence, $M_8$ described in Eq.~\eqref{eq_M8} is endowed with a manifold structure and we can think of $M_8$ as the  isotropy irreducible Wolf space.


\section{ Conclusions  }

The main purpose in this work has been to provide a complete, concrete and unified panoramic of the reductive $G_2$-homogeneous  spaces. We have tried that 
these homogeneous  spaces were explicit and the relations among them were clear, enclosing also detailed descriptions of the projections of the family of fiber bundles we can construct with the reductive $G_2$-homogeneous  spaces. We think the geometric prerequisites are modest.
The numerous references do not intend to serve as a background, but only to understand the relations among the appearing manifolds and to enrich the project, 
illustrating the interplay between Algebra and Geometry.

Some work in progress is in the following direction. Along this paper, we have focussed on the compact real form of the Lie group $G_2$. As it is well-known, there is another real form of the complex Lie algebra $\mathfrak{g}_2$, namely, the split Lie algebra $\mathfrak{g}_{2,2}$ of derivations of the split octonion algebra $\OO_s$. Its Lie group of automorphisms,   $G_{2,2}=\mathrm{Aut}(\OO_s)$,   is non-compact and not simply connected. 
Taking into account the wide generality of the results in \cite{LYg2}, it is possible to find a family of  $G_{2,2}$-manifolds closely related to the $G_2$-homogeneous manifolds $\{M_i\}_{i=1}^8$ described in this paper.
This task is not direct, for instance there will be more than $8$ reductive quotients, since $\OO_s$ possesses  different kinds of quaternion subalgebras, not all of them conjugated. This new longer family is very different from ours. The quotients are a priori non-compact, but at the same time share some remarkable properties with our compact family. In fact, the involved homogeneous spaces are reductive, and when we decompose them as a sum of irreducible modules, these decompositions do not coincide with the ones for the compact case, but their complexifications do. Thus, the dimensions of the vector spaces providing invariant metrics, invariant connections, or invariant connections with additional properties, are equal. In general, the knowledge of the specific modules appearing in the decompositions of $\mathfrak{g}_2$ as a sum of irreducible $\hh_i$-modules  for any $i=1,\dots,8$ (computed in \cite{LYg2}) provides a  very useful tool to study the corresponding homogeneous manifolds, which is not even fully exploited for all our $M_i$'s.


\end{document}